    \newcommand\email[1]{\_email #1\q_nil}
    \def\_email#1@#2\q_nil{%
      \href{mailto:#1@#2}{{\emailfont #1\emailampersat #2}}
    }
    \newcommand\emailfont{\sffamily}
    \newcommand\emailampersat{{\color{red}\small@}}
\theoremstyle{plain}
\newtheorem{thm}{Theorem}[section]
\newtheorem{lemma}[thm]{Lemma}
\theoremstyle{definition}
\newtheorem{definition}[thm]{Definition}
\newtheorem{example}[thm]{Example}
\newcommand{\be}{\begin{equation} } 
\newcommand{\ee}{\end{equation}}
\newcommand{\sa}{\Sigma}
\newcommand{\G}{\mathcal{G}}
\newcommand{\M}{\mathcal{M}eas_T}
\def\@normalsize{\@setsize\normalsize{14.5pt}\xiipt\@xiipt
\abovedisplayskip 12\p@ plus3\p@ minus7\p@
\belowdisplayskip \abovedisplayskip
\abovedisplayshortskip  \z@ plus3\p@
\belowdisplayshortskip  6.5\p@ plus3.5\p@ minus3\p@
\let\@listi\@listI}
\def\keywords{\vspace{.3em}
{\textit{Keywords}: Quantifiers in Probability,  First-order probability logic, Giry Monad \relax%
}}
\def\classification{\vspace{.5em}
{\textit{2010 MSC:  Primary} 03B48, 60A05, 18B99; \textit{ Secondary} 18C20, 18B10 \relax%
}}
\title{Quantifiers as Adjoints in Probability}
\author{Kirk Sturtz}
\date{}
\numberwithin{equation}{section}
\begin{document}
 \maketitle
 \begin{abstract} 
Using the Kleisi category of the Giry monad
the deterministic existential and universal quantifiers are generalized to incorporate nondeterminism.    These probabilistic quantifiers are quantified over the points of the  category  which are probability measures.  
\begin{flushleft}
\keywords 
\classification
\end{flushleft}
\end{abstract}

 \thispagestyle{empty}
 
\section{Introduction}  Probability theory is the logic of propositions under uncertainty
where deductive reasoning is not possible. 
Because predicates can be viewed as a family of propositions parameterized over some set, a better characterization is that probability theory is the logic of predicates under uncertainty.   In estimation problems such predicates are often specified  as a probability density function defined on the parameter space.
 The main obstacle to the further development of this ``predicate logic under uncertainty'',  to lay claim to a first-order logic under uncertainty, has been the treatment of quantifiers.  In the 1960's the main obstacle to the further development of algebraic logic was also the treatment of quantifiers.  Categorical logic solved this problem elegantly with the recognition that quantifiers are adjoint functors.  
  This  ``quantifiers as adjoints''  paradigm can be applied to the Kleisi category of the Giry monad,  denoted $\M$, whose objects are measurable spaces and  arrows $X \rightarrow Y$ are  measurable functions $X \rightarrow TY$ assigning a probability measure on $Y$ for each $x \in X$.  By conceptualizing quantifiers as adjoints the deterministic existential and universal  quantifiers are  extended  to include nondeterminism.  
    
The term ``probabilistic''  is used to mean either deterministic or  nondeterministic where the latter two terms have a precise definition stated in section 2.  
Under this convention our purpose is to generalize the deterministic existential and universal quantifiers to  probabilistic quantifiers for the category $\M$.

In the literature probability quantifiers have been studied almost exclusively from the model theoretic point of view~\cite{Hoover,Keisler1,Ras}.  In these models the quantification \mbox{$(P\textbf{x}\ge r)(\psi(\textbf{x}))$} refers to a set $\{ \textbf{x} |  \psi(\textbf{x}) \}$ having probability measure at least $r$.  The existential and universal quantifiers introduced here are fundamentally different in that these quantifiers do not require the condition $\psi$ defining the subset to be deterministic, and   
 these probabilistic  quantifiers quantify over probability measures.   From the categorical perspective quantification over the points of the category is the appropriate generalization of the deterministic concept, and the points of the category $\M$ are probability measures.

 This paper is organized as follows: Section 2  defines and gives the basic properties of the category $\M$.  Section 3 reviews deterministic quantifiers as adjoints in the category $Sets$.  Section 4, where new results are presented,  then extends the deterministic quantifiers to the category $\M$ giving probabilistic quantifiers.

\section{The Kleisi Category of the Giry Monad}
We denote the category of measurable spaces by $\mathcal{M}eas$.  Given an object $(X,\sa_X)$ in $\mathcal{M}eas$ we often denote it simply by $X$.
 The Giry monad\footnote{The theory of monads can be found in the text~\cite{BW,MacLane}.  The Giry monad is given in \cite{Giry}.} is the endofunctor $T$ defined on  $\mathcal{M}eas$ by the function defined on objects $X$ by
 \begin{equation*}
 \begin{array}{lcl} 
 TX &=& \textrm{The set of probability measures on }X \textrm{ endowed with the initial }\sigma-\textrm{algebra} \\
 && \sa_{TX}\textrm{ generated by the evaluation maps } \\ 
 && \quad \quad \quad \quad \quad \quad  \quad  \begin{array}{lcccc} ev_A&:&TX &\rightarrow & [0,1] \\
 &:& P & \mapsto & P[A]
 \end{array} \\
&&\textrm{for } A \in \sa_X.
 \end{array}
 \end{equation*}
 and the function defined on measurable functions $X \stackrel{f}{\longrightarrow} Y$ by $T(f)P = Pf^{-1}[\cdot] \in TY$.
 The unit of the monad $T$ is the measurable function $X \stackrel{\eta_X}{\longrightarrow} TX$ given by $\eta_X(x) = \delta_x$, the dirac measure on $X$ at $x$.
 The multiplication of the monad is $T^2X \stackrel{\mu_X}{\longrightarrow} TX$ defined at $Q \in T^2X$ by the probability measure $\mu_X(Q)$ on $TX$ at $A \in \sa_X$ by
 \begin{equation*}
 \mu_X(Q)[A] = \int_{q \in TX} q[A] \, dQ.
 \end{equation*}
 
 The Kleisi category of the Giry monad, denoted $\M$, thus has arrows $X \rightarrow Y$ denoting a measurable function $X \rightarrow TY$ and the composition $X \stackrel{f}{\longrightarrow} Y \stackrel{g}{\longrightarrow} Z$ in $\M$, denoted $g \bullet f$,  is the composite of the measurable functions 
 \begin{equation*}
 \begin{tikzpicture}[baseline=(current bounding box.center)]
 
 	\node	(X)	at	(0,0)		         {$X$};
	\node	(TY)	at	(2,0)	         {$TY$};
	\node	(T2Z)  at	(4,0)          {$T^2Z$};
	\node         (TZ)    at  (6,0)                 {$TZ$};
	\node        (cat)   at       (8,0)        {in $\mathcal{M}eas$};
	\draw[->, above] (X) to node  {$f$} (TY);
	\draw[->, above] (TY) to node {$Tg$} (T2Z);
	\draw[->,above]  (T2Z)  to node {$\mu_Z$}  (TZ);
 \end{tikzpicture}
 \end{equation*}
with the composition in $\mathcal{M}eas$ being denoted as $\mu_Z \circ Tg \circ f$.  The context will make clear whether an arrow $f$ is to be interpreted as an arrow of $\M$ or $\mathcal{M}eas$.
 
  The image of a point $x\in X$ under the composite $g \bullet f$ is the probability measure on $Z$ defined at $C \in \sa_Z$ by
 \begin{equation*} 
 \int_Y g(y)[C] \, d(f(x)).
 \end{equation*}
 The identity arrows in $\M$ are given by the unit $\eta_X$ of the monad for each object $X$ in $\M$.  We denote these identity arrows in $\M$ by $\eta$ or $1$ rather than $Id$ because the measurable functions $TX \stackrel{Id_{TX}}{\longrightarrow} TX$ play a significant role as  arrows $TX \stackrel{Id}{\longrightarrow} X$  in $\M$.   This significance come from the observation that by treating $P \in TX$ as the variable argument it follows that for each $A \in \sa_X$  the evaluation map $ev_A(\cdot): TX \rightarrow [0,1]$ satsifies $ev_A(P) = Id(P)[A]$.  Thus the $\sigma$-algebra of $TX$ is that induced by the $\M$ arrow $Id_{TX}$.
 
 \paragraph{Deterministic and Nondeterministic Maps}
   Every measurable mapping in the category of measurable spaces, 
\begin{equation*} 
X  \stackrel{f}{\longrightarrow} Y \quad \textrm{in } \mathcal{M}eas
\end{equation*}
may be regarded as a $\M$ arrow
\begin{equation*}
 X  \stackrel{\delta_f}{\longrightarrow} Y \quad  \textrm{in  } \M
\end{equation*}
defined by the dirac (or one point) measure 
\begin{center}
\begin{equation*}
\delta_f(x)[B] = 
 \left\{ \begin{array}{ll} 1 & \textrm{iff }f(x)\in B
\\ 0 & \textrm{otherwise} \end{array} \right..
\end{equation*}
\end{center}
Thus $\delta_f$ assigns to $x$ the  dirac
measure on $Y$ which is concentrated at $f(x)$.  

\begin{definition}  A  $\M$ arrow $X \stackrel{P}{\longrightarrow} Y$  is \emph{deterministic} if and only if for every $B \in \sa_Y$ the measurable functions $X \stackrel{P_B}{\longrightarrow} [0,1]$ defined by $P_B(x)=P(x)[B]$ assumes a value of either $0$ or $1$.
\end{definition}  \index{deterministic}

Any $\M$ arrow which is not deterministic is called nondeterministic.  The following result provides a useful  characterization of  deterministic mappings.

\begin{thm}  \label{thm:deterministic} Let $(Y,\sa_Y)$ be a countably generated space.  A $\M$ arrow $X \stackrel{P}{\longrightarrow} Y$ is deterministic if and only if it is determined by a measurable function $f$ so $P=\delta_f$.
\end{thm}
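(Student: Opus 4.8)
The plan is to dispatch the easy implication first and spend the work on the converse. If $P=\delta_f$ for a measurable $f$, then $P(x)[B]=\delta_f(x)[B]\in\{0,1\}$ for every $B\in\sa_Y$ directly from the definition of the Dirac measure, so $P$ is deterministic; note this direction uses no hypothesis on $Y$. For the converse, assume $X\stackrel{P}{\longrightarrow}Y$ is deterministic and fix a countable family $\{B_n\}_{n\in\mathbb{N}}$ generating $\sa_Y$. Enlarging it if necessary, I will assume $\{B_n\}_{n\in\mathbb{N}}$ is the Boolean algebra it generates (still countable), so that it is a $\pi$-system generating $\sa_Y$. Fix $x\in X$. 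Since $P$ is deterministic, each $P(x)[B_n]$ equals $0$ or $1$; set $B_n^x=B_n$ when $P(x)[B_n]=1$ and $B_n^x=Y\setminus B_n$ otherwise, and put $A_x=\bigcap_{n}B_n^x$. Each $B_n^x$ has $P(x)$-measure $1$, so by countable subadditivity $P(x)[Y\setminus A_x]\le\sum_n P(x)[Y\setminus B_n^x]=0$; hence $P(x)[A_x]=1$ and in particular $A_x\neq\emptyset$. Choose a point $f(x)\in A_x$ for each $x$.

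Next I would verify that $P(x)=\delta_{f(x)}$. By construction $f(x)\in B_n^x$ for every $n$, which says exactly that $f(x)\in B_n$ iff $P(x)[B_n]=1$; thus $\delta_{f(x)}$ and $P(x)$ assign the same value to every member of the generating algebra $\{B_n\}_{n\in\mathbb{N}}$. Since this algebra is a $\pi$-system generating $\sa_Y$ and both $P(x)$ and $\delta_{f(x)}$ are probability measures on $\sa_Y$, the $\pi$–$\lambda$ theorem forces $P(x)=\delta_{f(x)}$ on all of $\sa_Y$. Therefore $P=\delta_f$ as a function $X\to TY$, once we know $f$ is measurable.

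For measurability, observe that for any $B\in\sa_Y$,
\begin{equation*}
f^{-1}(B)=\{x\in X\mid \delta_{f(x)}[B]=1\}=\{x\in X\mid P(x)[B]=1\}=P_B^{-1}(\{1\}),
\end{equation*}
where $P_B=ev_B\circ P$; in particular this set does not depend on the particular points $f(x)$ that were chosen. Now $P:X\to TY$ is measurable and $ev_B:TY\to[0,1]$ is measurable by the very definition of $\sa_{TY}$, so $P_B$ is measurable, and hence $f^{-1}(B)=P_B^{-1}\big((\tfrac12,1]\big)\in\sa_X$. Since it suffices to check this for $B=B_n$, the function $f$ is measurable, which completes the argument.

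The step I expect to be the genuine obstacle is producing the point $f(x)$, i.e.\ showing $A_x\neq\emptyset$ and that $P(x)$ is the Dirac measure at such a point: a two-valued probability measure need not be a point mass in general (this is precisely where set-theoretic hypotheses such as measurable cardinals enter), so the countable generation of $\sa_Y$ is exactly what is used, via the subadditivity estimate above to keep the atom $A_x$ of full measure and hence nonempty. Once a point is in hand, the identification $P(x)=\delta_{f(x)}$ is a routine uniqueness-of-measures argument and the measurability of $f$ is bookkeeping with the definition of $\sa_{TY}$.
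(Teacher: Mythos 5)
Your proof is correct and follows essentially the same route as the paper: intersect the countably many $P(x)$-measure-one sets coming from a countable generating family closed under complementation to obtain a nonempty set from which to pick $f(x)$, then verify measurability of $f$ by identifying $f^{-1}(B)$ with $P_B^{-1}(\{1\})$ for generating $B$. The one point where you are more explicit than the paper is the $\pi$--$\lambda$ step upgrading the agreement of $P(x)$ and $\delta_{f(x)}$ on the generating algebra to agreement on all of $\sa_Y$; the paper leaves this extension implicit.
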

\begin{proof}  If $P=\delta_f$ for some measurable function then it assumes a value of either $0$ or $1$ so is deterministic.  To prove the converse let $\tilde{ \mathcal{G}}$ be a countable generating set for $\sa_Y$.  Then \mbox{$ \mathcal{G} = \{ G | G \in \tilde{ \mathcal{G}} \textrm{ or }G^c \in \tilde{ \mathcal{G}}\} \cup \{Y, \emptyset\}$} is a countable generating set.  Suppose for each $B \in \mathcal{G}$ that $P_B\stackrel{\triangle}{=} P(\cdot)[B]$ assumes a value of either $0$ or $1$ on $X$.
We claim there exist a measurable function $f:X \rightarrow Y$ such that  the diagram 

 \begin{equation*} 
 \begin{tikzpicture}[baseline=(current bounding box.center)]
 
 	\node	(X)	at	(0,0)		         {$X$};
	\node	(01)	at	(2,0)	         {$[0,1]$};
	\node	(Y)	at	(2,-1.5)          {$Y$};
	\node        (M)   at       (4,-.75)        {in $\mathcal{M}eas$};
	\draw[->, right, auto] (X) to node  {$P_B$} (01);
	\draw[->, below] (X) to node {$f$} (Y);
	\draw[->,right]  (Y)  to node {$\chi_B$}  (01);

 \end{tikzpicture}
 \end{equation*}
commutes, where $\chi_B$ is the characteristic function.
 
 For each $x \in X$  let $x_1 = \{B \in \G \, | \, P_B(x)=1 \}$ and  $x_0 = \{B \in \G \, | \, P_B(x)=0 \}$.  If $B \in x_1$ then the condition $P_{B} = \chi_{B} \circ f$ requires  that $f:X \rightarrow Y$  satisfies \mbox{$f(x) \in \cap  x_1$}.  
We claim that $\cap  x_1 \ne \emptyset$.    If $\cap x_1 = \emptyset$ then $\cup x_0 =  Y$.  Since $\G$ is countable this implies there exists a countable covering of $Y$ by measurable sets in $x_0$ which can be used to generate a countable disjoint cover $\{C_i\}_{i \in \mathbb{N}}$ of $Y$ by measurable sets $C_i \in \sa_Y$ satisfying $P(x)[C_i]=0$.  By countable additivity of the measure $P(x)$ this implies $P(x)[\cup_{i \in \mathbb{N}} C_i] = P(x)[Y]=0$ which contradicts the required hypothesis of $P(x)[Y]=1$.
Thus $\cap x_1 \ne \emptyset$ and  for each \mbox{$x \in X$} we can choose any set function $f$ satisfying \mbox{$f(x) \in \cap x_1$}.   Then the condition \mbox{$P_{B} = \chi_{B} \circ f$} holds for all $x \in X$ and all $B \in  \mathcal{G}$. 

 It  remains to prove that $f$ is measurable.   
Since $\{1\} \in \mathcal{B}_{[0,1]}$ we have \mbox{$P_{B}^{-1}( \{1\}) \in \Sigma_X$} and so
\mbox{$P_{B}^{-1}(\{1\})= f^{-1}(\chi_{B}^{-1}(\{1\})=f^{-1}(B) \in \Sigma_X$}.  Hence $f$ is measurable.   
\end{proof}

 \paragraph{Predicates}
 In the category of sets, $Sets$, the object $2=\{\bot, \top\}$, which we identify with $\{0,1\}$,  plays a special role, and in the category $\M$ it also plays a special role.  In $\M$ we endow $2$ with the discrete $\sigma$-algebra and     
 a $\M$ arrow  $X \stackrel{f}{\longrightarrow} 2$ evaluated at $x \in X$ gives a probability measure on $2$ and $f(x)[\{\top\}]$ represents the probability of truth of the predicate $f$ at $x$.  Alternatively,  since $T2 \cong [0,1]$ any $\M$ arrow $X \stackrel{f}{\longrightarrow} 2$ is equivalent to a measurable function $X \stackrel{f}{\longrightarrow} [0,1]$ and consequently we identify any $\M$ arrow to $2$ as a measurable function to $[0,1]$ with the restricted Borel $\sigma$-algebra of $\mathbb{R}$.

\section{Deterministic Quantifiers as Adjoints in $Sets$}  In the deterministic setting the existential and universal quantifiers  can be viewed as the left and right adjoint functors to the substitution functor, with these functors operating on  posets~\cite{LawRoseBook,MM}.
 Here, in the category of sets, the basic concepts are well understood and put into the proper framework their extension to the category $\M$ is possible.  Hence  we first summarize the deterministic quantifiers from the well-known viewpoint of quantifiers as adjoints in the category $Sets$ and then extend it to the category $\M$.  
 
In the category of sets, the set of arrows from $X$ to $2$,  denoted $Sets(X,2)$,  can be given a partial ordering by
defining, for $g,h \in Sets(X,2)$,
\begin{equation*}  
\begin{array}{c}
g \vdash_X h \\
\textrm{if and only if} \\
\forall x \in X \quad g(x) \le h(x)
\end{array}
\end{equation*}
where the ordering on $Sets(X,2)$ is determined by the ordering $\bot<\top$ ($0<1$) on the set $2$.  Thus if $g \vdash_X h$ then, for all $x \in X$, the truth of $g(x)$ implies the truth of $h(x)$.  $Sets(X,2)$ can be interpreted as the set of predicates on $X$ and the relation  $g \vdash_X h$ construed as \emph{the predicate }$h$\emph{ is at least as true as the predicate }$g$.   This partial ordering on $Sets(X,2)$ makes it a poset category with the objects being predicates and at most one arrow between any two objects.

Let $X \stackrel{f}{\longrightarrow} Y$ be an arrow in $Sets$.  Since  precomposition preserves ordering, 
\begin{equation*} 
g \vdash_Y h   \Rightarrow g \circ f \vdash_X   h \circ f
\end{equation*} 
each such arrow $f$ determines a functor between the posets, 
\begin{equation*}
\begin{array}{ccc}
Set(Y,2) &\stackrel{f^*}{\longrightarrow}& Set(X,2) \\
g &\mapsto & g \circ f
\end{array}
\end{equation*}
The quantifiers $\exists_f$ and $\forall_f$ are left and right adjoints, respectively, to this ``substitution'' functor $f^*$. 

If $g$ is a predicate on $X$ and $X \stackrel{f}{\longrightarrow} Y$ then the quantifier $\exists_f$ can be illustrated by  the noncommutative diagram

\begin{equation} \label{quantifierDiagram}
 \begin{tikzpicture}[baseline=(current bounding box.center)]
         \node  (X)  at (0,0)    {$X$};
          \node  (Y)  at  (3,0)    {$Y$};	
	\node   (2)  at  (1.5,-2)      {$2$};
	\draw[->,above] (X) to node {$f$} (Y);
	\draw[->,dashed, right] (Y) to node [xshift=3pt] {$\exists_f g$} (2);

	\draw[->,left] (X) to node {$g$}  (2);
	 \end{tikzpicture}
 \end{equation}
where the dashed arrow $\exists_f g$ is a predicate on $Y$ determined by the predicate $g$ and arrow $f$.  By defining
\begin{equation*}   
(\exists_fg)(y) = \sup_{x \in X} \{ g(x) \, | \, y=f(x) \}
\end{equation*}
it follows
\begin{equation*}  
g \vdash_X \exists_fg \circ f
\end{equation*}
and if $h$ is any predicate on $Y$ such that $g \vdash_X f^*(h)$ then $\exists_fg \vdash_Y h$.  

 Thus knowing the relation $g \vdash_X f^*(h)$ we can \emph{infer} the relation $\exists_f g \vdash_Y h$ and vice versa.  This is a powerful inference tool in classical logic and  expressed by
\begin{equation}  \label{leftAdjointEq}
\begin{picture}(200,30)(-63,-10)
\put(0,0){$g \vdash_X f^*(h)$}
\put(-5,-5){\line(1,0){58}}
\put(1,-15){$\exists_f g \vdash_Y h$}
\put(70,3){\vector(0,-1){13}} 
\put(75,-8){\vector(0,1){13}} 
\end{picture}
\end{equation}
where the vertical arrows indicates that the truth of one relation infers the truth of the other relation.
 
This  adjunction is expressed by 
\begin{equation} \label{leftAdjoint}
 \begin{tikzpicture}[baseline=(current bounding box.center)]
         \node  (X)  at (-2,0)    {$Sets(X,2)$};
         \node  (Y)  at  (2,0)    {$Sets(Y,2)$};	
         \node (ad) at (4,0)   {$\exists_f \dashv f^*$};
	\draw[->, above] ([yshift=2pt] X.east) to node {$\exists_f$} ([yshift=2pt] Y.west);
	\draw[->, below] ([yshift=-2pt] Y.west) to node {$f^*$} ([yshift=-2pt] X.east);
    \end{tikzpicture}
 \end{equation} 

Defining the quantifier $\forall_f$ by 
\begin{equation*} 
(\forall_f g)(y) = \inf_{ x \in X } \{ g(x) \, | \, y=f(x) \}
\end{equation*}
yields the adjunction
 
\begin{equation} \label{rightAdjoint}
 \begin{tikzpicture}[baseline=(current bounding box.center)]
         \node  (X)  at (-2,0)    {$Sets(Y,2)$};
         \node  (Y)  at  (2,0)    {$Sets(X,2)$};	
         \node (ad) at (4,0)   {$f^* \dashv \forall_f$};
	\draw[->, above] ([yshift=2pt] X.east) to node {$f^*$} ([yshift=2pt] Y.west);
	\draw[->, below] ([yshift=-2pt] Y.west) to node {$\forall_f$} ([yshift=-2pt] X.east);
    \end{tikzpicture}
 \end{equation} 
and is summarized by the bijective correspondence
\begin{equation} \label{rightAdjointEq}
\begin{picture}(200,30)(-50,-10)
\put(0,0){$f^*(h) \vdash_X g$}
\put(-5,-5){\line(1,0){58}}
\put(1,-15){$h \vdash_Y \forall_f g$}
\put(70,3){\vector(0,-1){13}} 
\put(75,-8){\vector(0,1){13}} 
\end{picture}
\end{equation}

By generalizing the definitions for the quantifiers $\exists_f$ and $\forall_f$  both of these adjunctions, expressions \ref{leftAdjoint} and \ref{rightAdjoint}, also hold in the category $\M$ where the familiar deterministic concepts are extended to include nondeterminism.

\section{Probabilistic Quantifiers}

\subsection{Partial Order Relations}   

Let $X$ be any object in $\M$.
Given two predicates  $g_1,g_2:X \rightarrow  2$ define
a partial ordering  on $\M(X,2)$ by
\begin{equation} \label{Prelation}
\begin{array}{c}
g_1 \vdash_{X} g_2 \\
\textrm{if and only if} \\
\forall x \in X \quad g_1(x)   \le   g_2(x).
\end{array}
\end{equation}
We  interpret the expression ``$g_1 \vdash_{X} g_2$''  as
\begin{quote}
The predicate  $g_2$ is probabilistically at least as true as $g_1$.
\end{quote}

The set $\M(X,2)$  can be viewed as a poset category.

\subsection{The Probabilistic Existential Quantifier} \label{sec:3}

Suppose $Y$ is a measurable space with a countably generated $\sigma$-algebra and. let $X \stackrel{f}{\longrightarrow} Y$  be any $\M$ arrow.    Since  precomposition preserves ordering on posets we have the ``substitution'' functor
\be  \label{substitution}
\M(TY,2) \stackrel{f^*}{\longrightarrow} \M(X,2)
\ee
defined by $f^*(h)=h \circ f$.  

We would like to construct the left adjoint $\exists_f$ to this substitution functor $f^*$, and consequently we need to construct a universal arrow as shown by the dashed arrow in
\begin{equation}   \label{universalArrowDiagram}
 \begin{tikzpicture}[baseline=(current bounding box.center)]
         \node (g)  at (-2,0)   {$X \stackrel{g}{\longrightarrow} [0,1]$};
         \node (ef) at (2,0)     {$X \stackrel{\exists_f g \circ f}{\longrightarrow} [0,1]$};
         \node    (hf)  at  (2,-2)  {$X \stackrel{h \circ f}{ \longrightarrow} [0,1]$};
         \node  (M)   at  (0,-3.)  {in $\mathcal{M}eas_T(X,2)$};
         \node  (N)   at  (6,-3.)  {in $\mathcal{M}eas_T(TY,2)$};

         \node  (e)  at   (5.5,0)  {$TY \stackrel{\exists_fg}{\longrightarrow} [0,1]$};
         \node  (h)  at (5.5,-2)  {$TY \stackrel{h}{\longrightarrow} [0,1]$};
         	\draw[->,dashed,above] (g) to node {$\vdash_X$} (ef);
	\draw[->,left] (g) to node [xshift=-8pt] {$\vdash_X$} (hf);
	\draw[->,right] (ef) to node [xshift=2pt] {$\vdash_X$} (hf);
	\draw[->,right] (e) to node {$\vdash_{TY}$} (h);
 \end{tikzpicture}
 \end{equation} 
so that the bijection
\begin{equation}  \label{leftAdjointEqP}
\begin{picture}(200,22)(-63,-15)
\put(0,0){$g \vdash_X f^*(h)$}
\put(-5,-5){\line(1,0){58}}
\put(1,-15){$\exists_f g \vdash_{TY} h$}
\put(70,3){\vector(0,-1){13}} 
\put(75,-8){\vector(0,1){13}} 
\end{picture}
\end{equation}
holds.
This is the same bijection as expressed in Equation \ref{leftAdjointEq} for the deterministic existential quantifier, only the categories have changed.  
This desired universal arrow, in $\M(X,2)$,
\begin{equation}  \label{desiredAdj}
g \vdash_X  \exists_fg  \circ f 
\end{equation}
expresses the condition that $g(x) \le \exists_fg(f(x))$ for all $x \in X$.  We now proceed to show that such a universal arrow exists.

\begin{lemma} If $(Y,\sa_Y)$ is a measurable space with a countably generated $\sigma$-algebra then $(TY,\sa_{TY})$ has a countable generating set and for each $Q \in TY$ the set $\{Q\}$ is measurable.
\end{lemma}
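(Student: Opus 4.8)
The plan is to cut the \emph{uncountable} family of evaluation maps $\{ev_A \mid A \in \Sigma_Y\}$, which by definition generates $\Sigma_{TY}$, down to a \emph{countable} one, by exploiting the fact that every $ev_A$ is built out of a fixed countable $\pi$-system. First I would fix a countable generating set $\tilde{\mathcal{G}}$ for $\Sigma_Y$ and replace it by the algebra $\mathcal{G}$ it generates (equivalently, the closure of $\tilde{\mathcal{G}}\cup\{Y\}$ under complements and finite intersections); $\mathcal{G}$ is again countable, it is a $\pi$-system containing $Y$, and $\sigma(\mathcal{G})=\Sigma_Y$. Since $\mathcal{B}_{[0,1]}$ is countably generated, say by $\{I_k\}_{k\in\mathbb{N}}$, the set $\mathcal{E}=\{ev_G^{-1}(I_k)\mid G\in\mathcal{G},\ k\in\mathbb{N}\}$ is countable, and the first assertion of the lemma amounts to the claim $\sigma(\mathcal{E})=\Sigma_{TY}$.

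To establish this, write $\mathcal{A}=\sigma(\mathcal{E})=\sigma(\{ev_G\mid G\in\mathcal{G}\})\subseteq\Sigma_{TY}$; it suffices to show that $ev_A$ is $\mathcal{A}$-measurable for every $A\in\Sigma_Y$, since then each generator $ev_A^{-1}(B)$ of $\Sigma_{TY}$ already lies in $\mathcal{A}$, forcing $\Sigma_{TY}=\mathcal{A}$. So I would consider
\[
\mathcal{D}=\{A\in\Sigma_Y\mid ev_A\text{ is }\mathcal{A}\text{-measurable}\},
\]
which contains $\mathcal{G}$ by construction, and check that $\mathcal{D}$ is a Dynkin ($\lambda$-)system: $ev_Y\equiv 1$ is measurable; $ev_{A^c}=1-ev_A$, so $\mathcal{D}$ is closed under complements; and for pairwise disjoint $A_1,A_2,\dots\in\mathcal{D}$, countable additivity of each $P\in TY$ gives $ev_{\bigcup_n A_n}=\sum_{n}ev_{A_n}$ pointwise, a pointwise limit of finite sums of $\mathcal{A}$-measurable functions, hence $\mathcal{A}$-measurable. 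By the $\pi$--$\lambda$ theorem, $\mathcal{D}\supseteq\sigma(\mathcal{G})=\Sigma_Y$, so $\mathcal{D}=\Sigma_Y$ and $\Sigma_{TY}=\mathcal{A}=\sigma(\mathcal{E})$ is countably generated.

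For the singletons, I would fix $Q\in TY$ and observe
\[
\{Q\}=\bigcap_{G\in\mathcal{G}}ev_G^{-1}\big(\{Q[G]\}\big).
\]
Each $\{Q[G]\}$ is closed in $[0,1]$, hence Borel, so every set in the intersection lies in $\Sigma_{TY}$, and the intersection is countable, hence measurable. The inclusion ``$\subseteq$'' is immediate; for ``$\supseteq$'', if $P\in TY$ satisfies $P[G]=Q[G]$ for all $G\in\mathcal{G}$, then $P$ and $Q$ are probability measures agreeing on the generating $\pi$-system $\mathcal{G}$, so $P=Q$ by the uniqueness part of the $\pi$--$\lambda$ theorem; thus $\{Q\}$ is measurable. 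I expect the main obstacle to be the second step: recognizing that the uncountably many maps $ev_A$ all factor measurably through the $\sigma$-algebra generated by the countably many $ev_G$, i.e.\ the verification (in particular closure under countable disjoint unions, via countable additivity) that $\mathcal{D}$ is a Dynkin system. Once that is in place, the remainder is routine bookkeeping with countably generated $\sigma$-algebras.
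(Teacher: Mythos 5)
Your proof is correct and follows essentially the same route as the paper: a countable family of preimages $ev_G^{-1}(I_k)$ with $G$ ranging over a countable generator of $\sa_Y$ generates $\sa_{TY}$, and $\{Q\}$ is recovered as a countable intersection of level sets of the $ev_G$. The only difference is that yours is more complete where the paper merely asserts: you close the generating set up to a countable $\pi$-system and then supply the two applications of the $\pi$--$\lambda$ theorem (the Dynkin-system argument showing every $ev_A$, $A\in\sa_Y$, is measurable for $\sigma(\{ev_G\})$, and the uniqueness-of-measures step showing the countable family of evaluations separates points of $TY$) that are genuinely needed for both halves of the lemma and are left implicit in the paper's proof.
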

\begin{proof}  If $\mathcal{G}$ be a countable generating set for $\sa_{Y}$, then $\mathcal{H} =\{\{ev_{B}^{-1}(r,s)\}$, for $B \in \mathcal{G}$ and $r,s \in \mathbb{Q}$ with $(r,s) \subset [0,1]$ is a countable generating set for $TY$, and
\begin{equation*}
\{Q\} = \cap_{B \in \mathcal{H}} B^*  \quad \textrm{where }B^* = \left\{ \begin{array}{ll} B & \textrm{if }Q \in B \\ B^c & \textrm{if }Q \not \in B \end{array} \right.
\end{equation*}

\end{proof}

\begin{lemma}
Suppose $X$ and  $Y$ are measurable spaces with $Y$ having a countably generated $\sigma$-algebra.  Let $X \stackrel{f}{\longrightarrow} Y$ and $X \stackrel{g}{\longrightarrow} 2$ be two $\M$ arrows with $f$ surjective. Then there exists a $\vdash_{TY}$-minimal object $\exists_fg$ in the poset $\M(TY,2)$ satisfying
\begin{equation}  \label{conditionDesired}
g \vdash_{TY} \exists_fg \circ f
\end{equation} 
\end{lemma}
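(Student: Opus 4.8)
The plan is to write down $\exists_f g$ explicitly as a fibrewise supremum, read off the minimality (universal) property almost immediately, and then devote the real effort to measurability. Regard $f$ as the measurable function $X \to TY$ that it is by definition of an $\M$ arrow, and set
\begin{equation*}
(\exists_f g)(Q) \;=\; \sup\{\, g(x) \;|\; x \in X,\ f(x) = Q \,\}, \qquad Q \in TY.
\end{equation*}
Because $f$ is surjective every fibre $f^{-1}(Q)$ is nonempty, so this is a supremum over a nonempty subset of $[0,1]$ and $\exists_f g$ is a well-defined $[0,1]$-valued function on $TY$; its measurability is the point addressed below. For each $x\in X$ the point $x$ lies in the fibre over $f(x)$, whence $g(x) \le (\exists_f g)(f(x))$, which is the relation~\eqref{conditionDesired}. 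For minimality, let $h \in \M(TY,2)$ satisfy $g \vdash_X h\circ f$; fixing $Q \in TY$, for every $x$ with $f(x)=Q$ we get $h(Q) = h(f(x)) \ge g(x)$, and taking the supremum over the fibre yields $h(Q) \ge (\exists_f g)(Q)$, i.e. $\exists_f g \vdash_{TY} h$. Thus, once measurability is established, $\exists_f g$ is the $\vdash_{TY}$-least element of $\{\, h \in \M(TY,2) \;|\; g \vdash_X h\circ f \,\}$, and in particular a $\vdash_{TY}$-minimal one.

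It remains to see that $\exists_f g$ is a measurable function $TY \to [0,1]$, i.e. an object of $\M(TY,2)$. For $t \in [0,1)$ one has
\begin{equation*}
\{\, Q \in TY \;|\; (\exists_f g)(Q) > t \,\} \;=\; f\bigl( \{\, x \in X \;|\; g(x) > t \,\} \bigr),
\end{equation*}
so it suffices to prove $f(A) \in \sa_{TY}$ for every $A \in \sa_X$. Here I would lean on the two structural facts supplied by the preceding lemma: $\sa_{TY}$ has a countable generating set, which may be enlarged to a countable Boolean subalgebra $\mathcal{A} = \{H_1,H_2,\dots\}$ of $\sa_{TY}$; and every singleton $\{Q\}$ is measurable, being $\bigcap_n H_n^{*}$ for a suitable choice $H_n^{*}\in\{H_n,H_n^{c}\}$. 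Since then $\{Q\} = \bigcap\{\, C\in\mathcal{A} \;|\; Q\in C \,\}$, each fibre satisfies $f^{-1}(Q) = \bigcap\{\, f^{-1}(C) \;|\; C\in\mathcal{A},\ Q\in C \,\}$, a countable, downward-directed approximation from outside by members of $\sa_X$. The aim is to use this to rewrite $f(A) = \{\, Q \;|\; f^{-1}(Q)\cap A \neq \emptyset \,\}$ as a countable Boolean combination of sets in $\mathcal{A}$ together with countably many singletons, handling the ``overshoot'' produced by the failure of $\sup$ to commute with the decreasing intersection through a measurable, singleton-generated correction term; surjectivity of $f$ is used to guarantee that the fibres genuinely exhaust $TY$, so that this accounting is self-contained.

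This last measurability claim I expect to be the crux and the only genuine difficulty. The image of a measurable set under a measurable map need not be measurable, so the argument cannot be purely formal and must draw essentially on the countable generation of $\sa_{TY}$ and on the measurability of points (and, I suspect, on surjectivity of $f$) — which is presumably exactly why those hypotheses are imposed; a reader should be prepared for the possibility that, in full generality of $X$, the delicate part is getting $f(A)$ back into $\sa_{TY}$. By contrast, the formula for $\exists_f g$ and its identification as the $\vdash_{TY}$-least, hence $\vdash_{TY}$-minimal, element of the relevant sub-poset are routine.
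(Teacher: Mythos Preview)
Your identification of measurability as the crux is correct, but the gap is real and your sketched repair will not close it. The assertion that $f(A)\in\sa_{TY}$ for every $A\in\sa_X$ is false under the stated hypotheses: take $Y=2$, so that $TY\cong[0,1]$ with its Borel $\sigma$-algebra, let $X=[0,1]^2$ with its Borel structure, and let $f$ be the (deterministic, surjective) first-coordinate projection. There are Borel sets $A\subset[0,1]^2$ whose projection $f(A)$ is analytic but not Borel; for such $A$ your fibrewise supremum $\exists_f(\chi_A)=\chi_{f(A)}$ is not measurable. No countable Boolean combination of elements of your algebra $\mathcal{A}$ together with singletons can produce a non-Borel analytic set---the ``singleton-generated correction term'' would have to absorb uncountably many points. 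So the pointwise formula you wrote down is in general \emph{not} an object of $\M(TY,2)$, and the least such object, if it exists, must be found by other means.

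The paper takes the opposite tack: it builds a measurable candidate first and argues minimality afterwards. For simple $g=\sum_i a_i\chi_{A_i}$ it sets $B_i=\bigcap\{B\in\G:A_i\subset f^{-1}(B)\}$, a countable intersection over the fixed countable generating family $\G$ of $\sa_{TY}$; each $B_i$ is then in $\sa_{TY}$ by construction (it is a measurable envelope of $f(A_i)$, not $f(A_i)$ itself), and $h=\sup_i a_i\chi_{B_i}$ is the proposed $\exists_f g$. General $g$ is handled by simple-function approximation and a limit. The trade-off is that measurability comes for free but minimality becomes the delicate step---one must argue that no measurable function can sit strictly between your (possibly non-measurable) fibrewise supremum and $h$---and it is worth reading the paper's minimality argument at that point with the projection example above in mind.
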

\begin{proof} First note that if $\sa_Y$ has a countable generating set then, by Lemma 1, $\sa_{TY}$ has a countable generating set, say  $\mathcal{G}' $.   Then $\mathcal{G}= \mathcal{G}'  \cup \{ B^c \, | \, B \in \mathcal{G}'  \}$ where $B^c$ is the complement of $B$ in $Y$ is a countable generating set for $\sa_{TY}$ also.  

To prove the lemma we do it in two steps, first assuming the measurable function $g$ is simple, and then letting $g$ be an arbitrary measurable function in $\M(X,2)$.

Step 1:  Suppose $g$ is a simple function with standard representation 
\begin{equation*}
g = \sum_{i=1}^n a_i \chi_{A_i}
\end{equation*}
where $\{A_i\}_{i=1}^n$ is a family of disjoint measurable sets of $\sa_X$.  
Define $B_i$ by 
\begin{equation*}
B_i = \displaystyle{ \bigcap_{\tiny{\begin{array}{cc}B \in \G \\  A_i \subset f^{-1}(B) \end{array}}} }B 
\end{equation*}
Thus $B_i$ is the smallest measurable set in $TY$ whose preimage under $f$ contains $A_i$.
Then for each $i=1,\ldots,n$ the function $h_i = a_i \chi_{B_i}$ is a measurable function on $TY$ and consequently
\begin{equation}
h = \displaystyle{ \sup_{i=1,\ldots,n}}  \{h_i\}
\end{equation}
is a measurable function on $TY$ and satisfies $g(x) \le h(f(x))$ for all $x \in X$.  Moreover we claim that $h$ is the minimal object in $\M(TY,2)$ under the relation $\vdash_{TY}$ satisfying the Condition \ref{conditionDesired} when $g$ is a simple function.  To observe this note that for any $Q \in TY$ such that $\chi_{B_i}(Q) = 1$ there exists an $x \in A_i$ 
such that $f(x)=Q \in B_i$. If there was no such $x \in A_i$ then, since Lemma 4.1 shows $\{Q\} \in \sa_{TY}$, the set $B_i^* = B_i \cap \{Q\}^c$ would be a measurable set in $\sa_{TY}$ with
 $B_i^*$ strictly smaller than $B_i$ satisfying $A_i \subseteq f^{-1}(B_i^*)$ contradicting the fact that $B_i$ is, by construction, the smallest measurable set of $\sa_{TY}$ whose preimage under $f$ contains $A_i$.
Thus it is necessarily the case that there exists an $x\in A_i$ such that $f(x)=Q$ and consequently it is required that 
\begin{equation} \label{minCondition}
a_i = g(x) \le h(f(x))=h(Q)
\end{equation}
so $h(Q)$ must be at least $sup\{ h_i\}$.  Since $h$ is chosen to be the least upper bound it clearly satisfies Condition \ref{minCondition} for all $Q \in TY$ and hence the object $h=\exists_fg \in \M(TY,2)$ satisfies the desired condition \ref{conditionDesired}.

Step 2: Since $g$ is $\mathcal{B}_{[0,1]}$-measurable there exists a sequence $\{\psi_n\}_n$ of measurable simple functions satisfying $\psi_n(x) \rightarrow g(x)$ for each $x \in X$.  By step 1, for each $n$ there exists a $\sa_{TY}$-measurable function $h_n:TY \rightarrow [0,1]$, defined by $h_n \stackrel{\triangle}{=} \displaystyle{ \sup_{i=1,\ldots,n}}  \{h_i\}$,  such that $\psi_n  \vdash_X  h_n \circ f$ with $h_n$ the $\vdash_{TY}$-minimal such object in $\M(TY,2)$.  Next, define
\begin{equation*}
\exists_fg=  \lim_{n \rightarrow \infty}  \sup_{i=1,\ldots,n}h_i = \inf_n \sup_{i=1,\ldots,n}h_i
\end{equation*}
which is $\sa_{TY}$-measurable.
From $h_n(f(x) \ge \psi_n(x) \rightarrow g(x)$, it follows that \mbox{$\exists_fg(f(x)) \ge g(x)$} for all $x\in X$, or equivalently $g \vdash_X \exists_fg \circ f$
 and since $\exists_fg$ is the infimum of the sequence $h_n$ of functions satisfying \mbox{$h_n(f(x) \ge \psi_n(x) \rightarrow g(x)$}  it follows that $\exists_fg$ is the $\vdash_{TY}$-minimal object  in the poset $\M(TY,2)$ satisfying the Condition \ref{conditionDesired}.
\end{proof}

\begin{lemma}  \label{metricLemma}  If the space $Y$ has a countably generated $\sigma$-algebra then the topology $\tau$ generated by the metric
defined on $TY$, for $R,Q \in TY$  by
\be  \label{metric}
d_{TY}(R,Q) = \displaystyle{ \sup_{ B \in \Sigma_Y} } \{ |R(B)-Q(B)|  \}
\ee
has the Borel algebra $\mathcal{B}(\tau) $ coinciding with $\sa_{TY}$.
\end{lemma}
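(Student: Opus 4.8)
The plan is to establish the two inclusions $\sa_{TY}\subseteq\mathcal{B}(\tau)$ and $\mathcal{B}(\tau)\subseteq\sa_{TY}$ separately. For the first, recall that $\sa_{TY}$ is by definition the smallest $\sigma$-algebra on $TY$ making every evaluation map $ev_A:TY\to[0,1]$ (for $A\in\sa_Y$) measurable, so it suffices to check that each $ev_A$ is $\mathcal{B}(\tau)$-measurable. In fact each $ev_A$ is $1$-Lipschitz for $d_{TY}$, since $|ev_A(R)-ev_A(Q)|=|R(A)-Q(A)|\le\sup_{B\in\sa_Y}|R(B)-Q(B)|=d_{TY}(R,Q)$; hence $ev_A$ is $\tau$-continuous, therefore Borel, and minimality of $\sa_{TY}$ yields $\sa_{TY}\subseteq\mathcal{B}(\tau)$.

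For the reverse inclusion the work is to show that every open ball $B_d(Q,\varepsilon)=\{R\in TY : d_{TY}(R,Q)<\varepsilon\}$ lies in $\sa_{TY}$; the difficulty is that $d_{TY}$ is defined by a supremum over the uncountable family $\sa_Y$. I would first cut this down to a countable supremum. Let $\mathcal G$ be a countable generating set for $\sa_Y$ and let $\mathcal A$ be the Boolean algebra it generates, so $\mathcal A$ is still countable and $\sigma(\mathcal A)=\sa_Y$. I claim $d_{TY}(R,Q)=\sup_{A\in\mathcal A}|R(A)-Q(A)|$. Writing $\nu=R-Q$, a finite signed measure whose total-variation measure $|\nu|$ is finite, the standard approximation lemma for finite measures gives, for each $B\in\sa_Y$ and each $\delta>0$, a set $A\in\mathcal A$ with $|\nu|(A\,\triangle\,B)<\delta$, whence $|\nu(A)-\nu(B)|\le|\nu|(A\,\triangle\,B)<\delta$; so the supremum over $\mathcal A$ already attains the supremum over $\sa_Y$. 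Granting this,
\[
B_d(Q,\varepsilon)=\bigcup_{q\in\mathbb Q,\,0<q<\varepsilon}\ \bigcap_{A\in\mathcal A}\bigl\{R\in TY : |ev_A(R)-Q(A)|\le q\bigr\},
\]
a countable union of countable intersections of sets of the form $ev_A^{-1}\bigl([Q(A)-q,\,Q(A)+q]\bigr)$, each of which belongs to $\sa_{TY}$ because $ev_A$ is $\sa_{TY}$-measurable. Hence every open $d_{TY}$-ball lies in $\sa_{TY}$. (The same countable family $\{ev_A : A\in\mathcal A\}$ in fact generates $\sa_{TY}$, by a routine Dynkin-system argument, which also reconfirms Lemma 4.1.)

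It then remains to pass from open balls to all of $\mathcal{B}(\tau)$, i.e.\ to conclude $\mathcal{B}(\tau)=\sigma(\text{open balls})\subseteq\sa_{TY}$. This is immediate once $(TY,d_{TY})$ is separable, since then $\tau$ is second countable and every open set is a countable union of balls. I expect this to be the main obstacle: the total-variation metric on $TY$ need not be separable when $\sa_Y$ is only countably generated --- e.g.\ for $Y=[0,1]$ the Dirac measures form an uncountable $d_{TY}$-discrete subset --- so making the argument close appears to require a separability assumption on $Y$ (hence on $TY$); under that assumption the steps above complete the proof, and the two inclusions are otherwise routine.
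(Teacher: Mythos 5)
Your two inclusions follow the same overall strategy as the paper's own proof (Lipschitz continuity of the evaluation maps for $\sa_{TY}\subseteq\mathcal{B}(\tau)$, and expressing open balls through countably many evaluation maps for the converse), but you are more careful at exactly the two places where the paper is not. First, the paper asserts $S_{\epsilon}(R)=\bigcap_{B\in\mathcal G}ev_B^{-1}(R(B)-\epsilon,R(B)+\epsilon)$ with $\mathcal G$ only a countable generating set; since $d_{TY}$ is a supremum over all of $\sa_Y$, this identity is not justified as written, and your reduction to the countable Boolean algebra $\mathcal A$ generated by $\mathcal G$ via the approximation $|\nu(A)-\nu(B)|\le|\nu|(A\,\triangle\,B)<\delta$, combined with the union over rational radii, is the correct repair.

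Second --- and this is the substantive point --- the obstruction you flag at the end is genuine and is a gap in the paper's proof, not in yours. The paper stops after showing that open balls lie in $\sa_{TY}$ and concludes $\mathcal{B}(\tau)\subseteq\sa_{TY}$; that last step needs every $\tau$-open set to be a countable union of balls, i.e.\ separability of $(TY,d_{TY})$, and the total-variation metric is not separable here. Your Dirac-measure observation can in fact be pushed to a refutation of the lemma as stated: for $Y=[0,1]$ the set $D=\{\delta_y : y\in[0,1]\}$ is $\tau$-closed and $d_{TY}$-discrete with pairwise distances $1$, so every subset $S\subseteq D$ equals $U\cap D$ for $U$ the union of radius-$\tfrac12$ balls centred at points of $S$, whence $\mathcal{B}(\tau)$ restricted to $D$ is the full power set $2^{D}$, of cardinality $2^{\mathfrak c}$; but by Lemma 4.1 the $\sigma$-algebra $\sa_{TY}$ is countably generated and therefore has at most $\mathfrak c$ elements. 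Hence $\mathcal{B}(\tau)\not\subseteq\sa_{TY}$, and the lemma needs an additional hypothesis (or a coarser, second-countable topology on $TY$ in place of the total-variation topology) to be true. You have correctly isolated the step that fails.
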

\begin{proof}  Let $\mathcal{G}$ denote a countable generating set for $\sa_Y$. 
Then the $\sigma$-algebra $\sa_{TY}$ is countably generated by the sets $ev_B^{-1}( r,s)$ for $r,s \in \mathbb{Q}$, with $(r,s) \subset [0,1]$, and for $B \in \mathcal{G}$, where $ev_B(\cdot) = Id(\cdot)[B]: TY \rightarrow [0,1]$.  We show each generating element $ev_B^{-1}(r,s) \in \mathcal{B}(\tau)$.
  Around each point $R \in ev_B^{-1}(r,s)$ we can put an open ball contained within that set.  Since
\begin{equation*}
 ev_B^{-1}(r,s) = \{ R \in TY \,| \, R(B) \in (r,s) \}
\end{equation*}
so if $Q \in ev_B^{-1}(r,s)$ with say, $Q(B) = u\in (r,s)$ then choosing $\delta = min(|r-u|, |s-u|)$ gives  $S_{\delta}(Q) \subset  ev_B^{-1}(r,s)$.    Taking the union of all such  balls around each point in $ev_B^{-1}(r,s)$ gives the union of open sets
\begin{equation*}
\displaystyle{ \bigcup_{ \tiny{ R \in ev_B^{-1}(r,s)}}} S_{\delta(R)}(R)
\end{equation*}
where $ \delta(R)=min( | r- R(B)|, |s-R(B)|)$, 
and is consequently an element of $\mathcal{B}(\tau)$.  Thus  each
\begin{equation*}
ev_B^{-1}(r,s) \in \mathcal{B}(\tau)
\end{equation*}
and since these generate $\sa_{TY}$ it follows $\sa_{TY} \subseteq \mathcal{B}(\tau)$.

Now we show the reverse inclusion $\mathcal{B}(\tau) \subseteq \sa_{TY}$.   Consider the open ball
\begin{equation*}
S_{\epsilon}(R) = \{Q \, | \, d_{TY}(R,Q)= \displaystyle{ \sup_{B \in \sa_Y}} \{ |R(B) - Q(B)| \} < \epsilon\}.
\end{equation*}
We have, for the countable generating set $\mathcal{G}$, 
\begin{equation*}
 S_{\epsilon}(R) = \displaystyle{ \bigcap_{ B \in \mathcal{G}}} ev_{B}^{-1}( R(B) - \epsilon, R(B) + \epsilon) \in \sa_{TY}.
\end{equation*}
Thus we conclude $\mathcal{B}(\tau) \subset \sa_{TY}$.
\end{proof}

\begin{lemma} \label{Dudley:Thm} Let $Y$ be a countably generated measurable space.  Consider the measurable space $(TY,\sa_{TY})$, and let $A$ be a subset of $TY$ (not necessarily in $\sa_{TY}$).  Let $h$ be a real valued-function on $A$ measurable for $\sa_A$, the subspace $\sigma$-algebra.  Then $h$ can be extended by zero to a real-valued function on all of $TY$, measurable for $\sa_{TY}$.
\end{lemma}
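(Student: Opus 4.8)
The plan is to recover $h$ from its sublevel sets -- each of which is, by definition of the subspace $\sigma$-algebra, the trace on $A$ of a member of $\sa_{TY}$ -- and then to enlarge those ambient sets monotonically so that a single infimum yields a globally $\sa_{TY}$-measurable function restricting to $h$. Essentially no structure of $TY$ is needed for this; the countable generation of $\sa_{TY}$ supplied by Lemma~4.1 enters only through an equivalent second route, where one transports the question to the standard Borel space $\N$ and invokes the classical (Dudley) extension theorem. I will outline both.

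First I would compose $h$ with a homeomorphism $\mathbb{R}\cong(0,1)$, so it suffices to extend a function $h\colon A\to(0,1)$. For each $q\in\mathbb{Q}\cap(0,1)$ the set $\{h\le q\}$ (a subset of $A$) lies in $\sa_A$, so I may choose $E_q\in\sa_{TY}$ with $A\cap E_q=\{h\le q\}$; replacing $E_q$ by $E_q'=\bigcap_{q''\in\mathbb{Q}\cap(0,1),\,q''\ge q}E_{q''}$ keeps $E_q'\in\sa_{TY}$, preserves $A\cap E_q'=\{h\le q\}$ (since the sets $\{h\le q''\}$ increase with $q''$), and makes $q\mapsto E_q'$ monotone. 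I then put
\begin{equation*}
\overline h(Q)=\inf\bigl\{\,q\in\mathbb{Q}\cap(0,1) : Q\in E_q'\,\bigr\},\qquad \inf\emptyset:=1 .
\end{equation*}
Since $\{\overline h<r\}=\bigcup_{q<r}E_q'\in\sa_{TY}$ for every $r$, the map $\overline h\colon TY\to[0,1]$ is $\sa_{TY}$-measurable, and $\overline h(Q)=\inf\{q:h(Q)\le q\}=h(Q)$ for $Q\in A$. Resetting $\overline h$ to $\tfrac12$ on the $\sa_{TY}$-set where it equals $0$ or $1$ (a set disjoint from $A$) and then composing back with the homeomorphism yields a real-valued $\sa_{TY}$-measurable function on $TY$ that agrees with $h$ on $A$.

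For the second route, Lemma~4.1 gives a countable generating set $\{B_k\}_k$ for $\sa_{TY}$; by its proof every singleton is an intersection of the $B_k$ and their complements, so this set separates points and $\phi\colon TY\to\N$, $\phi(Q)=(\chi_{B_k}(Q))_k$, is an injective $\sa_{TY}$-measurable map with $\sa_{TY}=\phi^{-1}(\B(\N))$; a short computation with injectivity shows $\phi$ restricts to a measurable isomorphism $(A,\sa_A)\to(\phi(A),\B(\N)\cap\phi(A))$. Then $h\circ(\phi|_A)^{-1}$ is a real-valued function on the subset $\phi(A)$ of the standard Borel space $\N$, measurable for the subspace $\sigma$-algebra, hence extends to a Borel $g\colon\N\to\mathbb{R}$ by Dudley's extension theorem -- itself proved by exactly the monotonization above -- and $g\circ\phi$ is the desired extension of $h$. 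I do not anticipate a serious obstacle: the only points requiring care are the monotonization step and, in the second route, the bi-measurability of $\phi$ onto its image. One subtlety is worth flagging, though: the extension obtained is literally ``by zero'' on $A^{c}$ only when $A\in\sa_{TY}$ (in which case one multiplies $\overline h$ by $\chi_A$); for an arbitrary subset $A$ one cannot insist that the extension vanish off $A$, and what the argument actually delivers is a $\sa_{TY}$-measurable function on all of $TY$ restricting to $h$ on $A$.
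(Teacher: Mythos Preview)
Your argument is correct, and both routes you sketch do the job. The paper, however, takes a third (and shortest) path: it invokes Lemma~\ref{metricLemma} to equip $TY$ with the metric $d_{TY}$ whose Borel $\sigma$-algebra coincides with $\sa_{TY}$, and then simply cites Dudley's Theorem~4.2.5, which is stated for metric spaces. So where you use Lemma~4.1 (countable generation and separation of points) to embed $TY$ into the standard Borel space $\N$, the paper uses Lemma~4.3 to realize $\sa_{TY}$ directly as a metric Borel $\sigma$-algebra and feeds that into Dudley. Your first, self-contained route via monotonized sublevel sets is more elementary and, as you note, makes no use of the ambient structure of $TY$ at all; it is essentially a proof of the Dudley theorem itself rather than a reduction to it.

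Your closing remark is well taken and worth emphasizing: the phrase ``extended by zero'' in the statement is accurate only when $A\in\sa_{TY}$, since otherwise $\chi_A$ is not measurable and one cannot force the extension to vanish on $A^c$. Dudley's theorem, and your direct argument, deliver a measurable extension agreeing with $h$ on $A$, nothing more. The paper's later use of this lemma (extending $\exists_f g|_{f(X)}$ from the image $f(X)\subset TY$) therefore tacitly relies either on $f(X)$ being measurable or on the weaker conclusion you state.
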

\begin{proof}  This is  \cite[Theorem 4.2.5]{Dudley} where we have used the result of Lemma \ref{metricLemma} so that the topological space induced by the metric $d$ has the Borel algebra which coincides with the induced $\sigma$-algebra $\sa_{TY}$.
\end{proof}

Let $(f(X),\sa_{f(X)})$ denote the image of an $\M$ arrow $X \stackrel{f}{\longrightarrow} Y$ which is not necessarily surjective, with $f(X)$ having the subspace $\sigma$-algebra.  By Lemma 4.2 there exists a $\vdash_{f(X)}$-minimal object $\exists_fg|_{f(X)}$ such that $g \vdash_{f(X)} \exists_fg|_{f(X} \circ f$.
Define the extension of $\exists_fg|_{f(X)}$  by zero using Lemma~\ref{Dudley:Thm}
\begin{equation}   \label{zeroExtension}
 \begin{tikzpicture}[baseline=(current bounding box.center)]
        \node (X)  at (2,0)     {$X$};
         \node (fX2) at (5,0)   {$(f(X),\sa_{f(X)})$};
         \node    (01)  at  (5,-2)  {$[0,1]$};
         \node  (TY)  at  (8,0)  {$TY$};
         \node  (M)   at  (10.5,-1)  {in $\mathcal{M}eas$};
         \draw[->>,above] (X) to node {$f$} (fX2);
         \draw[->,left] (X) to node [xshift=-3pt] {$g$} (01);
	\draw[->,right] (fX2) to node {$\exists_fg|_{f(X)}$} (01);
	\draw[>->,above]  (fX2) to node {$\iota$} (TY);
	\draw[->,out=270,in=15,right] (TY) to node [xshift=8pt,yshift=-2pt] {$\exists_f g$} (01);
 \end{tikzpicture}
 \end{equation} 
 where $\iota$ is the inclusion mapping.  The measurable function $\exists_fg$ is the $\vdash_{TY}$-minimal object  extending $\exists_fg|_{f(X)}$ to $TY$. This establishes the universal arrow $g  \vdash_{X} \exists_fg \circ f$ in Diagram  \ref{universalArrowDiagram} and hence the bijective correspondence \ref{leftAdjointEqP}.

 If $Y$ a \textit{countable space} 
 then, using the fact that for each probability measure $Q$ on $Y$ the singleton $\{Q\} \in \sa_{TY}$, 
 the expression $\exists_{f} g$  reduces to 
\be \label{existsdef}
\exists_{f} g(Q) = \sup_{ x \in X} \{ g(x) \, | \, Q = f (x) \}.
\ee
where we use the fact $\sup \emptyset =0$.  However if $Y$ is uncountable then such a simple expression need not exist.

Given $g_1 \vdash_X g_2$ it is clear that $\exists_f g_1 \vdash_{TY} \exists_f g_2$ so that
 any $\M$ arrow $X \stackrel{f}{\longrightarrow} Y$  determines a functor of posets
\begin{equation} 
 \begin{tikzpicture}[baseline=(current bounding box.center)]
         \node (X) at (0,0)     {$\M(X,2)$};
         \node (TY) at (4,0)     {$\M(TY,2)$};
	\draw[->, above] ([yshift=2pt] X.east) to node {$\exists_f$} ([yshift=2pt] TY.west);
 \end{tikzpicture}
 \end{equation} 
given by the above construction for $\exists_f$.

The existence of the universal arrow $g \vdash_X \exists_fg \circ f$ for each $g \in \M(X,2)$ and the functoriality of $\exists_f$ establish

\begin{thm} \label{existence:Thm}    Let $Y$ be a measurable space with  a countably generated $\sigma$-algebra. For any  $\M$ arrow $X \stackrel{f}{\longrightarrow} Y$   the adjunction
\begin{equation*}   
 \begin{tikzpicture}[baseline=(current bounding box.center)]
         \node  (X)  at (-3.5,0)    {$\M(X,2)$};
          \node  (Y)  at  (1,0)    {$\M(TY,2)$};	
          \node  (ad)  at  (4,0)  {$ \exists_{f} \dashv f^* $};
	\draw[->, above] ([yshift=2pt] X.east) to node {$\exists_f$} ([yshift=2pt] Y.west);
	\draw[->, below] ([yshift=-2pt] Y.west) to node {$f^*$} ([yshift=-2pt] X.east);
	 \end{tikzpicture}
 \end{equation*} 
holds.
\end{thm}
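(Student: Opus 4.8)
The plan is to reduce the assertion to the standard characterization of an adjunction between posets: since $\M(X,2)$ and $\M(TY,2)$ each carry at most one arrow between any two objects, it suffices to exhibit the monotone maps $\exists_f$ and $f^*$ together with the bijective correspondence
\[
\exists_f g \vdash_{TY} h \quad\Longleftrightarrow\quad g \vdash_X f^*(h),
\]
valid for all $g \in \M(X,2)$ and $h \in \M(TY,2)$; this correspondence is then automatically the required natural isomorphism of hom-posets. Monotonicity of $f^*$ is immediate because precomposition with $f$ preserves $\vdash$. The functor $\exists_f$ is the one already constructed: restrict $f$ to its image $(f(X),\sa_{f(X)})$, apply Lemma~4.2 to the surjection $X \twoheadrightarrow f(X)$ to obtain a $\vdash_{f(X)}$-minimal $\exists_f g|_{f(X)}$ with $g \vdash_{f(X)} \exists_f g|_{f(X)}\circ f$, and extend it by zero off $f(X)$ via Lemma~\ref{Dudley:Thm} (legitimate because Lemma~\ref{metricLemma} identifies $\sa_{TY}$ with the Borel $\sigma$-algebra of the metric $d_{TY}$) to get the $\sa_{TY}$-measurable function $\exists_f g$ on all of $TY$. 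Monotonicity of $\exists_f$ follows since $g_1 \vdash_X g_2$ makes every competitor $h$ for $g_2$ (i.e.\ $g_2 \vdash_X h\circ f$) also a competitor for $g_1$, so the $\vdash$-least competitor for $g_1$ lies below that for $g_2$ on $f(X)$, and is $0$ off $f(X)$.

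Next I would verify the unit inequality $g \vdash_X f^*(\exists_f g)$: since $f$ takes values in $f(X)$, for each $x\in X$ we have $\exists_f g(f(x)) = \exists_f g|_{f(X)}(f(x)) \ge g(x)$ by the defining property of $\exists_f g|_{f(X)}$, which is precisely $g \vdash_X \exists_f g\circ f = f^*(\exists_f g)$. From this, the downward implication of the correspondence is formal: if $\exists_f g \vdash_{TY} h$, then applying the monotone $f^*$ gives $f^*(\exists_f g) \vdash_X f^*(h)$, and transitivity with the unit yields $g \vdash_X f^*(h)$. For the upward implication, assume $g \vdash_X f^*(h)$, i.e.\ $g(x)\le h(f(x))$ for all $x$. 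The restriction $h|_{f(X)}$ is $\sa_{f(X)}$-measurable and satisfies $g \vdash_{f(X)} h|_{f(X)}\circ f$, so by the $\vdash_{f(X)}$-minimality of $\exists_f g|_{f(X)}$ we obtain $\exists_f g|_{f(X)}(Q) \le h(Q)$ for every $Q \in f(X)$; for $Q \notin f(X)$ the value $\exists_f g(Q) = 0$ is automatically $\le h(Q)$. Hence $\exists_f g \vdash_{TY} h$, which closes the bijection and therefore establishes $\exists_f \dashv f^*$.

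The main obstacle is not the final assembly, which is the routine poset argument just sketched, but ensuring the construction of $\exists_f g$ is legitimate at the level of measurable functions. One must know that the $\vdash_{f(X)}$-minimal candidate over the image is genuinely $\sa_{f(X)}$-measurable (handled by the simple-function approximation in Lemma~4.2), that its extension by zero to $TY$ is $\sa_{TY}$-measurable (this is exactly Lemma~\ref{Dudley:Thm}, which in turn requires Lemma~\ref{metricLemma} to present $\sa_{TY}$ as a metric Borel $\sigma$-algebra), and that this zero extension remains $\vdash_{TY}$-minimal among all $h$ with $g \vdash_X h\circ f$. The last point is the only place where care is needed beyond citing the lemmas: any such $h$ restricts to a competitor on $f(X)$ and dominates the constant $0$ off $f(X)$, so $\exists_f g$ sits below it globally. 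With these measurability facts in hand, the adjunction follows.
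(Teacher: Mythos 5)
Your proposal is correct and follows essentially the same route as the paper: construct $\exists_f g$ by applying Lemma~4.2 to the corestriction of $f$ onto its image and extending by zero via Lemmas~4.3 and~4.4, then conclude the adjunction from the unit inequality, minimality, and monotonicity. You are in fact more careful than the paper, which asserts the bijective correspondence without explicitly checking both implications or the $\vdash_{TY}$-minimality of the zero extension as you do.
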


This theorem could equally well be expressed as 
\begin{equation*} 
 \begin{tikzpicture}[baseline=(current bounding box.center)]
         \node  (X)  at (-4,0)    {$\mathcal{M}eas(X,[0,1])$};
          \node  (Y)  at  (1,0)    {$\mathcal{M}eas(TY,[0,1])$};	
          \node  (ad)  at  (4,0)  {$ \exists_{f} \dashv f^* $};
	\draw[->, above] ([yshift=2pt] X.east) to node {$\exists_f$} ([yshift=2pt] Y.west);
	\draw[->, below] ([yshift=-2pt] Y.west) to node {$f^*$} ([yshift=-2pt] X.east);
	 \end{tikzpicture}
 \end{equation*} 
 with the partial orderings defined on the sets of measurable functions making them poset categories.
 By consideration of composites of existential quantifiers the following result suggest why the formulation in terms of $\M$ is appropriate.

\subsection{Composites of Existential Quantifiers}  

Let $X$ be a measurable space and $Y$ and $Z$ be measurable spaces having countably generated $\sigma$-algebras. Consider the $\M$ diagram
\begin{equation*} 
 \begin{tikzpicture}[baseline=(current bounding box.center)]
         \node  (X)  at (0,0)    {$X$};
          \node  (Y)  at  (2.,0)    {$Y$};	
          \node  (Z)  at  (4,0)  {$Z$};
	\draw[->, above] (X) to node {$f$} (Y);
	\draw[->, above] (Y) to node {$g$} (Z);
	\draw[->,out=-30,in=210,looseness=.5,below] (X) to node {$g \bullet f$} (Z);
	 \end{tikzpicture}
 \end{equation*} 
The three meaurable functions $f,g$,and $\mu_Z$ (viewing the $\M$ arrows $f$ and $g$ as measurable functions) give the sequence of adjunctions
\begin{equation*} 
 \begin{tikzpicture}[baseline=(current bounding box.center)]
         \node  (X)  at (-4.5,0)    {$\M(X,2)$};
          \node  (TY)  at  (-.25,0)    {$\M(TY,2)$};	
          \node  (T2Z)  at  (4.5,0)  {$\M(T^2Z,2)$};
          \node (TZ) at (9,0)  {$\M(TZ,2)$};
	\draw[->, above] ([yshift=2pt] X.east) to node [yshift=-2pt] {$\exists_f$} ([yshift=2pt] TY.west);
	\draw[->, above] ([yshift=2pt] TY.east) to node {$\exists_{Tg}$} ([yshift=2pt] T2Z.west);
	\draw[->,above] ([yshift=2pt] T2Z.east) to node {$\exists_{\mu_Z}$} ([yshift=2pt] TZ.west);
	\draw[->,below] ([yshift=-2pt] TZ.west)  to node {$\mu_Z^*$} ([yshift=-2pt] T2Z.east);
	\draw[->,below] ([yshift=-2pt] T2Z.west)  to node {$(Tg)^*$} ([yshift=-2pt] TY.east);
	\draw[->,below] ([yshift=-2pt] TY.west)  to node [yshift=1pt] {$f^*$} ([yshift=-2pt] X.east);

	 \end{tikzpicture}
 \end{equation*} 
between posets.  On the other hand we also have the adjunction
\begin{equation*} 
 \begin{tikzpicture}[baseline=(current bounding box.center)]
         \node  (X)  at (-1.5,0)    {$\M(X,2)$};
          \node (TZ) at (4,0)  {$\M(TZ,2)$};
	\draw[->, above] ([yshift=2pt] X.east) to node [yshift=-2pt] {$\exists_{g \bullet f}$} ([yshift=2pt] TZ.west);
	\draw[->,below] ([yshift=-2pt] TZ.west)  to node {$(g \bullet f)^*$} ([yshift=-2pt] X.east);
	 \end{tikzpicture}
 \end{equation*} 
 where we view $g \bullet f$ as a measurable function $X \rightarrow TZ$.
Given the two adjunctions,  $\exists_{g \bullet f} \dashv (g \bullet f)^*$ and  $\exists_{\mu_Z} \circ \exists_{Tg} \circ \exists_f \dashv (\mu_Z \circ Tg \circ f)^*$, and by definition of composition in $\M$, $(g \bullet f)^* = (\mu_Z \circ Tg \circ f)^*$  it follows the functors $\exists_{g \bullet f}$ and $\exists_{\mu_Z} \circ \exists_{Tg} \circ \exists_f$ are both left adjuncts of $(g \bullet f)^*$.  In general, adjuncts to a given functor are defined up to a natural isomorphism. However since the categories are posets  it follows that 
\begin{equation}   \label{composites}
\exists_{g \bullet f} = \exists_{\mu_Z} \circ \exists_{Tg} \circ \exists_f
\end{equation}
or equivalently, $\exists_{\mu_Z \circ Tg \circ f} = \exists_{\mu_Z} \circ \exists_{Tg} \circ \exists_f$.

\subsection{The Probabilistic Universal Quantifier} 
Let $X \stackrel{f}{\longrightarrow} Y$ be a $\M$ arrow with $Y$ a countably generated measurable space.
To construct a right adjoint to the substitution functor given in Diagram \ref{substitution} we need to construct a universal arrow as shown by the dashed arrow in Diagram \ref{universalArrowDiagram2}

\begin{equation}   \label{universalArrowDiagram2}
 \begin{tikzpicture}[baseline=(current bounding box.center)]
         \node (gf)  at (-2,0)   {$TX \stackrel{\forall_fg \circ f}{\longrightarrow} [0,1]$};
         \node (g) at (2,0)     {$TX \stackrel{g}{\longrightarrow} [0,1]$};
         \node    (hf)  at  (-2,-2)  {$TX \stackrel{h \circ f}{ \longrightarrow} [0,1]$};
         \node  (M)   at  (0,-3)  {in $\mathcal{M}eas_T(X,2)$};
         \node  (N)   at  (-5,-3.)  {in $\mathcal{M}eas_T(TY,2)$};

         \node  (forallfg)  at   (-5.5,0)  {$TY \stackrel{\forall_fg}{\longrightarrow} [0,1]$};
         \node  (h)  at (-5.5,-2)  {$TY \stackrel{h}{\longrightarrow} [0,1]$};
         	\draw[->,dashed,above] (gf) to node {$\vdash_{X}$} (g);
	\draw[->,below] (hf) to node [xshift=3pt] {$\vdash_{X}$} (g);
	\draw[->,right] (hf) to node [xshift=2pt] {$\vdash_{X}$} (gf);
	\draw[->,right] (h) to node {$\vdash_{TY}$} (forallfg);
 \end{tikzpicture}
 \end{equation} 
so that the bijection
\begin{equation}  \label{Universalbijection}
\begin{picture}(200,22)(-63,-10)
\put(0,0){$h \vdash_{TY} \forall_fg$}
\put(-5,-5){\line(1,0){58}}
\put(1,-15){$h \circ f \vdash_{X} g$}
\put(70,3){\vector(0,-1){13}} 
\put(75,-8){\vector(0,1){13}} 
\end{picture}
\end{equation}
holds. 

The functor $\forall_{f}: \M(X,2)  \rightarrow  \M(TY,2)$
can be defined in an analogous procedure as that for $\exists_f$ by altering Lemma 4.2.

\begin{lemma}
Suppose $X$ and  $Y$ are measurable spaces with $Y$ having a countably generated $\sigma$-algebra.  Let $X \stackrel{f}{\longrightarrow} Y$ and $X \stackrel{g}{\longrightarrow} 2$ be two $\M$ arrows with $f$ surjective. Then there exists a $\vdash_{TY}$-maximal object $\forall_fg$ in the poset $\M(TY,2)$ satisfying
\begin{equation}  \label{conditionDesired2}
\forall_fg \circ f \vdash_{X} g
\end{equation} 
\end{lemma}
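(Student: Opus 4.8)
The plan is to obtain the asserted universal arrow \ref{conditionDesired2} from Lemma 4.2 by the order-reversing duality $g\mapsto 1-g$, and then to record the direct construction paralleling the proof of Lemma 4.2 — this is the ``altering Lemma 4.2'' the text alludes to. Since the categories involved are posets, the universal arrow is unique once it exists, so it suffices to exhibit a concrete $\forall_f g$ and check the two defining properties of a $\vdash_{TY}$-maximal solution.

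Recall that $\M(X,2)\cong\mathcal{M}eas(X,[0,1])$ with the pointwise order, so $\neg\colon g\mapsto 1-g$ is an order-reversing involution on both $\M(X,2)$ and $\M(TY,2)$, and it commutes with substitution, $\neg(h\circ f)=(\neg h)\circ f$, hence $\neg\circ f^{*}=f^{*}\circ\neg$. Since $f$ is surjective, Lemma 4.2 applies to the arrow $\neg g\in\M(X,2)$ and yields a $\vdash_{TY}$-minimal $\exists_{f}(\neg g)\in\M(TY,2)$ with $\neg g\vdash_{X}\exists_{f}(\neg g)\circ f$. I propose $\forall_{f}g:=\neg\,\exists_{f}(\neg g)=1-\exists_{f}(1-g)$. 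Two checks remain, both purely order-theoretic. First, applying $\neg$ to $\neg g\vdash_{X}\exists_{f}(\neg g)\circ f$ and using that $\neg$ reverses order gives $(\neg\exists_{f}(\neg g))\circ f\vdash_{X}\neg\neg g=g$, i.e.\ $\forall_{f}g\circ f\vdash_{X}g$, so $\forall_{f}g$ is feasible for \ref{conditionDesired2}. Second, if $h\in\M(TY,2)$ satisfies $h\circ f\vdash_{X}g$, then $\neg g\vdash_{X}(\neg h)\circ f=f^{*}(\neg h)$, so by the $\vdash_{TY}$-minimality of $\exists_{f}(\neg g)$ we get $\exists_{f}(\neg g)\vdash_{TY}\neg h$, and one more application of $\neg$ gives $h\vdash_{TY}\neg\exists_{f}(\neg g)=\forall_{f}g$. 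Thus $\forall_{f}g$ is $\vdash_{TY}$-maximal among arrows satisfying \ref{conditionDesired2}, and the same computation upgrades $\exists_{f}\dashv f^{*}$ to $f^{*}\dashv\forall_{f}$.

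Alternatively, mirroring the proof of Lemma 4.2 directly: in Step~1, for a simple $g=\sum_{i=1}^{n}a_{i}\chi_{A_{i}}$ with $\{A_{i}\}$ its standard (hence partitioning) representation, reuse the sets $B_{i}=\bigcap_{B\in\G,\ A_{i}\subseteq f^{-1}(B)}B$ and set $h_{i}=1-(1-a_{i})\chi_{B_{i}}$ (value $a_{i}$ on $B_{i}$, value $1$ off $B_{i}$) and $h=\inf_{i}h_{i}$, a measurable function on $TY$. For $x\in A_{j}$ one has $f(x)\in B_{j}$, whence $h(f(x))\le h_{j}(f(x))=a_{j}=g(x)$; and $h$ is $\vdash_{TY}$-maximal by the argument of Lemma 4.2: were some $h'$ with $h'\circ f\vdash_{X}g$ to satisfy $h'(Q)>h(Q)=a_{j}$ for a $Q\in B_{j}$, then necessarily $Q=f(x)$ for some $x\in A_{j}$ (otherwise $B_{j}\setminus\{Q\}$, measurable by Lemma 4.1, would be a strictly smaller measurable set with $A_{j}\subseteq f^{-1}(B_{j}\setminus\{Q\})$), giving $h'(Q)\le g(x)=a_{j}$, a contradiction. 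In Step~2 the one genuine change from Lemma 4.2 is that $g$ must be approximated from \emph{above} by simple functions $\phi_{n}\downarrow g$ (e.g.\ $\phi_{n}=2^{-n}\lceil 2^{n}g\rceil\wedge 1$); then $\forall_{f}g:=\inf_{n}\forall_{f}\phi_{n}$ is measurable, satisfies $\forall_{f}g(f(x))\le\inf_{n}\phi_{n}(x)=g(x)$, and is maximal because any $h$ with $h\circ f\vdash_{X}g$ also has $h\circ f\vdash_{X}\phi_{n}$ (as $g\le\phi_{n}$), hence $h\vdash_{TY}\forall_{f}\phi_{n}$ for every $n$.

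I expect the main obstacle, in the direct approach, to be precisely the Step~1 maximality claim — that the value $1$ assigned off each $B_{i}$ cannot be lowered at any point — which relies on Lemma 4.1 (singletons of $TY$ are measurable) in exactly the way Lemma 4.2 does. In the duality approach there is no real obstacle beyond bookkeeping, provided one is willing to use the involution $g\mapsto 1-g$ on $\M(X,2)\cong\mathcal{M}eas(X,[0,1])$.
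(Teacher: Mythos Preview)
Your proposal is correct. The duality argument $\forall_f g := 1-\exists_f(1-g)$ is a genuinely different route from the paper's own proof: the paper never invokes the involution $g\mapsto 1-g$ but instead literally repeats the two-step construction of Lemma~4.2 with $\sup$ and $\inf$ interchanged. In particular, the paper's Step~1 keeps $h_i=a_i\chi_{B_i}$ (so $h_i$ takes the value $0$ off $B_i$) and sets $h=\inf_i h_i$, whereas your direct version assigns the value $1$ off $B_i$; and the paper's Step~2 does not insist on approximation from above but simply writes $\forall_f g=\sup_n\inf_i h_i$ for an unspecified sequence $\psi_n\to g$. Your duality route is shorter, reuses Lemma~4.2 wholesale, and makes $f^*\dashv\forall_f$ an immediate formal consequence of $\exists_f\dashv f^*$; the paper's route is self-contained and exhibits $\forall_f$ via the same simple-function machinery as $\exists_f$, at the price of rerunning the maximality argument. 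Your direct variant is in fact the more faithful dualization, since the off-$B_i$ value $1$ and the downward approximation $\phi_n\downarrow g$ are precisely what the involution produces when applied line by line to the proof of Lemma~4.2.
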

\begin{proof} First note that if $\sa_Y$ has a countable generating set then, by Lemma 1, $\sa_{TY}$ has a countable generating set, say  $\mathcal{G}' $.   Then $\mathcal{G}= \mathcal{G}'  \cup \{ B^c \, | \, B \in \mathcal{G}'  \}$ where $B^c$ is the complement of $B$ in $Y$ is a countable generating set for $\sa_{TY}$ also.  

To prove the lemma we do it in two steps, first assuming the measurable function $g$ is simple, and then letting $g$ be an arbitrary measurable function in $\M(X,2)$.

Step 1:  Suppose $g$ is a simple function with standard representation 
\begin{equation*}
g = \sum_{i=1}^n a_i \chi_{A_i}
\end{equation*}
where $\{A_i\}_{i=1}^n$ is a family of disjoint measurable sets of $\sa_X$.  
Define $B_i$ by 
\begin{equation*}
B_i = \displaystyle{ \bigcap_{\tiny{\begin{array}{cc}B \in \G \\  A_i \subset f^{-1}(B) \end{array}}} }  B 
\end{equation*}
Thus $B_i$ is the smallest measurable set in $TY$ whose preimage under $f$ contains $A_i$.
Then for each $i=1,\ldots,n$ the function $h_i = a_i \chi_{B_i}$ is a measurable function on $TY$ and consequently 
\begin{equation}
h = \displaystyle{ \inf_{i=1,\ldots,n}}  \{h_i\}
\end{equation}
is a measurable function on $TY$ and satisfies $h(f(x)) \le g(x)$ for all $x \in X$.  Moreover we claim that $h$ is the maximal object in $\M(TY,2)$ under the relation $\vdash_{TY}$ satisfying the Condition \ref{conditionDesired2} when $g$ is a simple function.  To observe this note that for any $Q \in TY$ such that $\chi_{B_i}(Q) = 1$ there exists an $x \in A_i$ 
such that $f(x)=Q \in B_i$. If there was no such $x \in A_i$ then, since Lemma 4.1 shows $\{Q\} \in \sa_{TY}$, the set $B_i^* = B_i \cap \{Q\}^c$ would be a measurable set in $\sa_{TY}$ with
 $B_i^*$ strictly smaller than $B_i$ satisfying $A_i \subseteq f^{-1}(B_i^*)$ contradicting the fact that $B_i$ is, by construction, the smallest measurable set of $\sa_{TY}$ whose preimage under $f$ contains $A_i$.
Thus it is necessarily the case that there exists an $x\in A_i$ such that $f(x)=Q$ and consequently it is required that 
\begin{equation} \label{maxCondition}
h(f(x))=h(Q) \le g(x) = a_i
\end{equation}
so $h(Q)$ must be at most $\inf\{ h_i\}$.  Since $h$ is chosen to be the greatest lower bound it clearly satisfies Condition \ref{maxCondition} for all $Q \in TY$ and hence the object $\forall_fg = h \in \M(TY,2)$ satisfies the desired Condition \ref{conditionDesired2}.

Step 2: Since $g$ is $\mathcal{B}_{[0,1]}$-measurable there exists a sequence $\{\psi_n\}_n$ of measurable simple functions satisfying $\psi_n(x) \rightarrow g(x)$ for each $x \in X$.  By step 1, for each $n$ there exists a $\sa_{TY}$-measurable function $h_n:TY \rightarrow [0,1]$, defined by $h_n \stackrel{\triangle}{=} \displaystyle{ \inf_{i=1,\ldots,n}}  \{h_i\}$,  such that $ h_n \circ f \vdash_X \psi_n$ with $h_n$ the $\vdash_{TY}$-maximal such object in $\M(TY,2)$.  Next, define
\begin{equation*}
\forall_fg=  \lim_{n \rightarrow \infty}  \inf_{i=1,\ldots,n}h_i = \sup_n \inf_{i=1,\ldots,n}h_i
\end{equation*}
which is $\sa_{TY}$-measurable.
From $h_n(f(x)) \le \psi_n(x) \rightarrow g(x)$, it follows that \mbox{$\forall_fg(f(x)) \le g(x)$} for all $x\in X$, or equivalently $\forall_fg \circ f \vdash_X g$
 and since $\forall_fg$ is the supremum of the sequence $h_n$ of functions satisfying \mbox{$h_n(f(x)) \le \psi_n(x) \rightarrow g(x)$}  it follows that $\forall_fg$ is the $\vdash_{TY}$-maximal object  in the poset $\M(TY,2)$ satisfying the Condition \ref{conditionDesired2}.
\end{proof}

In constructing the right adjoint to the substitution functor, just as in the Diagram \ref{zeroExtension} for the existential quantifier, we have the diagram
\begin{equation*}  
 \begin{tikzpicture}[baseline=(current bounding box.center)]
        \node (X)  at (2,0)     {$X$};
         \node (fX2) at (5,0)   {$(f(X),\sa_{f(X)})$};
         \node    (01)  at  (5,-2)  {$[0,1]$};
         \node  (TY)  at  (8,0)  {$TY$};
         \node  (M)   at  (10.5,-1)  {in $\mathcal{M}eas$};
         \draw[->>,above] (X) to node {$f$} (fX2);
         \draw[->,left] (X) to node [xshift=-3pt] {$g$} (01);
	\draw[->,right] (fX2) to node {$\forall_fg|_{f(X)}$} (01);
	\draw[>->,above]  (fX2) to node {$\iota$} (TY);
	\draw[->,out=270,in=15,right] (TY) to node [xshift=8pt,yshift=-2pt] {$\forall_f g$} (01);
 \end{tikzpicture}
 \end{equation*} 
 where $(f(X),\sa_{f(X)})$ is the image of $f$ with the subspace $\sigma$-algebra, 
 $\iota$ is the inclusion mapping, and to extend the measurable function $\forall_f|_{f(X)}$ from $f(X)$ to $TY$ we take the extension of this measurable function by the constant one.  (In Lemma \ref{Dudley:Thm} the measurable function can extended  by any real-valued constant.)
  The measurable function $\forall_fg$ is the $\vdash_{TY}$-maximal object  extending $\forall_fg|_{f(X)}$ to $TY$. This establishes the universal arrow $g  \vdash_{X} \exists_fg \circ f$ in Diagram  \ref{universalArrowDiagram2} and hence the bijective correspondence \ref{Universalbijection}.

The argument applied to the existential quantifier to show  $\exists_f g$ is the object part of the universal arrow $g \vdash_X \exists_fg \circ f$ applies to show  $\forall_f g$ is the object part of the universal arrow $\forall_fg \circ f \vdash_{X} g$ to obtain
 
\begin{thm} \label{forall:Thm}   Let $Y$ be a measurable space with  a countably generated $\sigma$-algebra.  For any  $\M$ arrow $X \stackrel{f}{\longrightarrow} Y$  the adjunction
\begin{equation*}  
 \begin{tikzpicture}[baseline=(current bounding box.center)]
         \node  (X)  at (-2,0)    {$\M(TY,2)$};
          \node  (Y)  at  (2,0)    {$\M(X,2)$};	
          \node  (ad)  at  (5,0)  {$ f^* \dashv \forall_f$};
	\draw[->, above] ([yshift=2pt] X.east) to node {$f^*$} ([yshift=2pt] Y.west);
	\draw[->, below] ([yshift=-2pt] Y.west) to node {$\forall_f$} ([yshift=-2pt] X.east);
	 \end{tikzpicture}
 \end{equation*} 
holds.
\end{thm}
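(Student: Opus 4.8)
\medskip
\noindent\textbf{Proof proposal (plan).} The plan is to transcribe, in order-dual form, the construction that established Theorem~\ref{existence:Thm}: build the universal arrow $\forall_f g \circ f \vdash_X g$ for every $g$, check functoriality of $\forall_f$, and invoke the same criterion used for $\exists_f$ that a universal arrow to each object of the base poset together with functoriality assembles into a (right) adjoint.

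First I would dispose of the surjective case. When the $\M$ arrow $f$, regarded as a measurable function $X\to TY$, is surjective, the lemma immediately preceding this theorem furnishes a $\vdash_{TY}$-maximal object $\forall_f g\in\M(TY,2)$ with $\forall_f g\circ f\vdash_X g$. I claim this is a universal arrow from $f^*$ to $g$: if $h\in\M(TY,2)$ satisfies $f^*(h)=h\circ f\vdash_X g$ then maximality gives $h\vdash_{TY}\forall_f g$, and conversely $h\vdash_{TY}\forall_f g$ yields $h\circ f\vdash_X\forall_f g\circ f\vdash_X g$ because precomposition with $f$ preserves the ordering on posets. Since $\M(TY,2)$ and $\M(X,2)$ have at most one arrow between any two objects, the comparison $h\vdash_{TY}\forall_f g$ is the unique factorization, so the universal property holds; this is exactly the bijective correspondence~\ref{Universalbijection} and the dashed arrow of Diagram~\ref{universalArrowDiagram2}.

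Next I would treat an arbitrary $\M$ arrow $f:X\to Y$ by restriction to its image, just as in Diagram~\ref{zeroExtension}. Put $(f(X),\sa_{f(X)})\subseteq(TY,\sa_{TY})$ with the subspace $\sigma$-algebra; since $\sa_{TY}$ is countably generated (Lemma~4.1) so is $\sa_{f(X)}$ (take traces of a countable generating set), and $f:X\twoheadrightarrow f(X)$ is surjective, so the surjective case supplies a $\vdash_{f(X)}$-maximal $\forall_f g|_{f(X)}:f(X)\to[0,1]$ with $\forall_f g|_{f(X)}\circ f\vdash_X g$. I then extend it to all of $TY$ by the constant $1$: by Lemma~\ref{metricLemma} the $\sigma$-algebra $\sa_{TY}$ is the Borel $\sigma$-algebra of the metric topology of $d_{TY}$, so Lemma~\ref{Dudley:Thm} applies, applied to $1-\forall_f g|_{f(X)}$ extended by $0$ and then subtracted from the constant $1$; the resulting $\forall_f g:TY\to[0,1]$ is $\sa_{TY}$-measurable, is still $[0,1]$-valued, and is hence an object of $\M(TY,2)$. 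It then remains to verify that this extension is $\vdash_{TY}$-maximal among all $h\in\M(TY,2)$ with $h\circ f\vdash_X g$: the constraint $h\circ f\vdash_X g$ restrains $h$ only on $f(X)$, since for $Q\in TY\setminus f(X)$ there is no $x$ with $f(x)=Q$, so $\forall_f g(Q)=1$ is the largest admissible value, while on $f(X)$ the restriction satisfies $h|_{f(X)}\circ f\vdash_X g$, whence $h|_{f(X)}\vdash_{f(X)}\forall_f g|_{f(X)}$ by the surjective case. Thus $h\vdash_{TY}\forall_f g$ pointwise, giving the universal arrow of Diagram~\ref{universalArrowDiagram2}.

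Finally, having a universal arrow from $f^*$ to every object $g$ of $\M(X,2)$, the reasoning used for $\exists_f$ makes $g\mapsto\forall_f g$ the object part of a functor $\forall_f:\M(X,2)\to\M(TY,2)$; monotonicity $g_1\vdash_X g_2\Rightarrow\forall_f g_1\vdash_{TY}\forall_f g_2$ follows at once from $\forall_f g_1\circ f\vdash_X g_1\vdash_X g_2$ and the maximality of $\forall_f g_2$, and in the poset setting the triangle identities are automatic, so the universal arrows together with this functoriality establish $f^*\dashv\forall_f$. The one step that is not purely formal — and hence the main obstacle — is the middle one: securing that the extension by the constant $1$ is both measurable (this is where Lemma~\ref{metricLemma} and Lemma~\ref{Dudley:Thm} are needed) and genuinely $\vdash_{TY}$-maximal over all of $\M(TY,2)$, the latter resting on the observation that points of $TY$ outside the image of $f$ impose no constraint at all, so the pointwise-largest admissible value there is $1$. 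Everything else is the order-dual transcription of the existential construction plus the poset-adjunction bookkeeping.
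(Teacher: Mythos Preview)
Your proposal is correct and follows essentially the same route as the paper: invoke the surjective-case lemma for $\forall_f$, extend from the image $f(X)$ to all of $TY$ by the constant $1$ via Lemma~\ref{Dudley:Thm}, and transcribe the universal-arrow and functoriality bookkeeping from the existential case in order-dual form. The only cosmetic difference is that the paper simply remarks parenthetically that Lemma~\ref{Dudley:Thm} allows extension by any real constant, whereas you obtain the extension by $1$ through the equivalent device of applying the zero-extension to $1-\forall_f g|_{f(X)}$ and then subtracting from $1$.
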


If  $Y$ is a countable space then the universal quantifier reduces to
\begin{equation*}
\forall_{f} g(Q) = \inf_{ x \in X} \{ g(x) \, | \, Q = f (x) \}
\end{equation*}
where we use the fact $\inf \emptyset = 1$.

The composite of universal quantifiers leads to the result, analogous to Equation \ref{composites}, 
\begin{equation*}
\forall_{g \bullet f} = \forall_{\mu_Z} \circ \forall_{Tg} \circ \forall_f.
\end{equation*}

\subsection{Continuity}

A special case of interest  occurs  when $X$ also has a  countably generated $\sigma$-algebra.  
Suppose this is the case and we have the  $\M$ arrow $X \stackrel{f}{\longrightarrow} Y$.   Define $f^{\sharp}$ by the commutativity of the diagram
\begin{equation}  \label{defLift}  
 \begin{tikzpicture}[baseline=(current bounding box.center)]
         \node  (W)  at (0,2)    {$X$};
         \node (TW) at (-2,0)   {$TX$};
          \node  (Y)  at  (-0,0)    {$Y$};	
          \node  (ad)  at  (3.5,1)  {in $\M$};
	\draw[->, left] (TW) to node [xshift=-2pt] {$Id$} (W);
	\draw[->, right] (W) to node [xshift=2pt] {$f$} (Y);
	\draw[->,above] (TW) to node {$f^{\sharp}$} (Y);
	 \end{tikzpicture}
 \end{equation} 
  where we recall that the $\M$ mapping $Id$ is the measurable identity function \mbox{$TX \rightarrow TX$} giving rise  to the \mbox{$\M$} arrow \mbox{$TX \rightarrow X$}.  Thus $f$ is the lift of $f^{\sharp}$ and
\begin{equation*}
f^{\sharp}(P) = (f \bullet Id)(P) = \int_{x \in X} f(x) \, dP = f \bullet P.
\end{equation*}

\begin{thm} \label{continuity} Let $f$ and $f^{\sharp}$ be defined in Diagram \ref{defLift}.  Then $f^{\sharp}$ is  continuous with respect to the  metrics $d_{TX}$ and $d_{TY}$ whose forms are specified by Equation \ref{metric}. 
\end{thm}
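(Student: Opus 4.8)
The plan is to prove the stronger statement that $f^{\sharp}$ is $1$-Lipschitz, i.e. $d_{TY}\big(f^{\sharp}(R),f^{\sharp}(Q)\big)\le d_{TX}(R,Q)$ for all $R,Q\in TX$; continuity is then immediate. The single idea is the explicit formula $f^{\sharp}(P)[B]=\int_{x\in X}f(x)[B]\,dP$ for $B\in\sa_Y$, which turns the estimate into a comparison of the integrals of one fixed bounded measurable function against the two measures $R$ and $Q$, whose sup-distance $d_{TX}(R,Q)$ we control directly.

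First I would fix $R,Q\in TX$ and $B\in\sa_Y$ and set $\phi_B:=ev_B\circ f:X\to[0,1]$. Since $f:X\to TY$ is a measurable function and each evaluation map $ev_B:TY\to[0,1]$ is measurable by the defining property of $\sa_{TY}$, the function $\phi_B$ is $\sa_X$-measurable, so $\{\phi_B>t\}\in\sa_X$ for every $t\in[0,1]$. Next I would use the layer-cake representation, valid because $\phi_B$ is $[0,1]$-valued,
\[
f^{\sharp}(P)[B]=\int_X\phi_B\,dP=\int_0^1 P\big(\{\phi_B>t\}\big)\,dt\qquad\text{for }P\in TX,
\]
and subtract the instances $P=R$ and $P=Q$:
\[
\big|\,f^{\sharp}(R)[B]-f^{\sharp}(Q)[B]\,\big|\;\le\;\int_0^1\big|\,R(\{\phi_B>t\})-Q(\{\phi_B>t\})\,\big|\,dt\;\le\;d_{TX}(R,Q),
\]
the last inequality because $\{\phi_B>t\}\in\sa_X$ bounds each integrand by $\sup_{A\in\sa_X}|R(A)-Q(A)|=d_{TX}(R,Q)$ and Lebesgue measure on $[0,1]$ has total mass $1$. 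Taking the supremum over $B\in\sa_Y$ gives $d_{TY}\big(f^{\sharp}(R),f^{\sharp}(Q)\big)\le d_{TX}(R,Q)$, so $f^{\sharp}$ is $1$-Lipschitz, hence continuous for the metrics of Equation \ref{metric}. Combined with Lemma \ref{metricLemma} this also re-derives that $f^{\sharp}$ is $(\sa_{TX},\sa_{TY})$-measurable, since on both sides the Borel algebra of the metric coincides with the initial $\sigma$-algebra.

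There is essentially no obstacle here: the whole content is recognizing this reduction, and the only point needing care is the measurability of $\phi_B$ and of its superlevel sets, which is exactly what the initial $\sigma$-algebra on $TY$ supplies. If one prefers to avoid the layer-cake formula, the same bound follows by taking simple functions $\psi=\sum_j c_j\chi_{A_j}$, with disjoint $A_j\in\sa_X$ and $0\le c_j\le 1$, increasing pointwise to $\phi_B$, observing $\big|\int\psi\,dR-\int\psi\,dQ\big|\le\sum_j c_j|R(A_j)-Q(A_j)|\le 2\,d_{TX}(R,Q)$ (split the index set by the sign of $R(A_j)-Q(A_j)$), and passing to the limit by monotone convergence; this gives Lipschitz constant $2$, still enough for continuity. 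Note finally that the hypothesis that $X$ be countably generated is not used in the Lipschitz estimate itself; it enters only through Lemma \ref{metricLemma} if one additionally wants $d_{TX}$ to generate $\sa_{TX}$.
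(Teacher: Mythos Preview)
Your proof is correct. Both your argument and the paper's establish the same Lipschitz-type bound and differ only in the elementary inequality used at the core step. The paper works sequentially: given $P_i\to P$ in $d_{TX}$, it passes to the total variation norm of the signed measure $P_i-P$ and invokes $\bigl|\int h\,d\mu\bigr|\le\|h\|_\infty\,\|\mu\|$ with $h=f(\cdot)[B]$ and $\mu=P_i-P$, obtaining $d_{TY}(f^{\sharp}(P_i),f^{\sharp}(P))\le\|P_i-P\|$ (which equals $2\,d_{TX}(P_i,P)$ for probability measures). You bypass signed measures entirely via the layer-cake representation, working only with measurable sets and landing directly on the sharper constant $1$. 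Your route is thus slightly more elementary and gives a marginally stronger conclusion; your observation that the countable-generation hypothesis on $X$ is irrelevant to the Lipschitz estimate itself, and enters only if one wants $d_{TX}$ to generate $\sa_{TX}$ via Lemma~\ref{metricLemma}, is also correct.
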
 
\begin{proof}  Let $\{ P_i\}_{i=1}^{\infty}  \rightarrow P$ be a convergent sequence in $TX$ with respect to the metric $d_{TX}$.  Observe that convergence in the metric of $TX$ is precisely uniform convergence with respect to the measurable sets $A \in \sa_X$.   Let $||\cdot ||$ denote the total variation of a signed measure.   We will use the fact that $\lim_{i \rightarrow \infty} ||P_i - P || = 0$ holds if and only if $P_i(A)$ converges to $P(A)$ uniformly in $A$. 

For each $B \in \sa_Y$ by definition
\begin{equation*}
f^{\sharp}(P_i) = \int_{x \in X} f(x)[B] \, dP_i
\end{equation*}
and, in general, for any $X$ measurable function $h$ and signed measure $\mu$ on $X$ 
\begin{equation*}
\left| \int_{x \in X} h(x) \, d\mu \right| \le ||h||_{\infty}  ||\mu||
\end{equation*}
where $||h||_{\infty} = \sup_{x \in X}\{|h(x)|\}$.
Taking $h(x) = \hat{f}(x)[B]$ and $\mu = P_i - P$, and using the observation that  $||\hat{f}(x)[B]||_{ \infty} \le 1$ for all $B \in \sa_{Y}$ it follows that 
\begin{equation*}
\forall B \in \sa_Y \quad \left| \int_{x \in X} f(x)[B] \, dP_i  -  \int_{x \in X} f(x)[B] \, dP \right| \le  ||P_i - P|| 
\end{equation*}
Taking the supremum with respect to $B \in \sa_Y$  of this inequality yields
\begin{equation*}
d_{TY}(f^{\sharp}(P_i),f^{\sharp}(P)) \le ||P_i - P||  
\end{equation*}
which implies, by the uniform convergence of $\{P_i\}_i$ which is equivalent to $||P_i - P|| \rightarrow 0$, that $\{f^{\sharp}(P_i)\}_i$ converges in the metric $TY$.       
\end{proof}

When $X$ and $Y$ are finite spaces, which implies each space is  isomorphic to a finite set $\{x_1,x_2,\ldots,x_{m}\}$ and $\{y_1,y_2,\ldots,y_n\}$ with the discrete $\sigma$-algebra, the computation of the quantifiers $(\exists_{f^{\sharp}} g)(Q)$ and $(\forall_{f^{\sharp}} g)(Q)$ 
with respect to the adjunctions
\begin{equation*} 
 \begin{tikzpicture}[baseline=(current bounding box.center)]
         \node  (X)  at (-2.5,0)    {$\M(TY,2)$};
          \node  (Y)  at  (1.,0)    {$\M(TX,2)$};	
          \node  (ad)  at  (-1.5,-1.2)  {$ {f^{\sharp}}^* \dashv \forall_{f^{\sharp}}$};
	\draw[->, above] ([yshift=2pt] X.east) to node {${f^{\sharp}}^*$} ([yshift=2pt] Y.west);
	\draw[->, below] ([yshift=-2pt] Y.west) to node {$\forall_{f^{\sharp}}$} ([yshift=-2pt] X.east);
	
         \node  (X)  at (4.5,0)    {$\M(TX,2)$};
          \node  (Y)  at  (8,0)    {$\M(TY,2)$};	
          \node  (ad)  at  (6,-1.2)  {$ \exists_{f^{\sharp}} \dashv {f^{\sharp}}^* $};
	\draw[->, above] ([yshift=2pt] X.east) to node {$\exists_{f^{\sharp}}$} ([yshift=2pt] Y.west);
	\draw[->, below] ([yshift=-2pt] Y.west) to node {${f^{\sharp}}^*$} ([yshift=-2pt] X.east);
 
 \end{tikzpicture}
 \end{equation*} 

reduce to solving a linear programming problem.
\begin{example}  \label{existentialExample}
Suppose $X=\{x_1,x_2,x_3\}$,  $Y=\{y_1,y_2\}$,  $g=\frac{1}{2} \chi_{\{x_1\}} + \frac{3}{5} \chi_{\{x_2\}} + \frac{9}{10} \chi_{\{x_3\}}$, which induces the relation $\hat{g}(P)=\int_X g \, dP$ on $TX$, and $X \stackrel{f}{\longrightarrow} Y$ is given by the Table~\ref{fconditional}.
\begin{table}[h]
\begin{center}
\begin{tabular}{| c | c | c |}  \hline
$f$ & $\{y_1\}$ & $\{ y_2 \}$ \\ \hline
$x_1$ & $1$ & $0$ \\ \hline
$x_2$ & $\frac{1}{2}$ & $\frac{1}{2}$ \\ \hline
$x_3$ & $\frac{3}{10}$ & $\frac{7}{10}$ \\ \hline
\end{tabular}
\caption[]{The conditional probability $f$.}
\label{fconditional}
\end{center}
\end{table}

Let $P = \sum_{i=1}^3 p_i \delta_{x_i}$.  For $\alpha \in [0,1]$ and $Q=  \alpha \delta_{y_1} + (1- \alpha) \delta_{y_2}$ the computation of the predicate $(\forall_{f^{\sharp}} g)$ at $Q$  is given by solving
\begin{equation*}  
\begin{array}{l}
\displaystyle{ \min_{p_i}} \, \, \frac{1}{2} p_1 + \frac{3}{5} p_2 + \frac{9}{10} p_3 \\
\textrm{subject to}  \\
\begin{array}{lclcccccccc}
Q(\{y_1\}) &=&  \int_X f(x)[ \{y_1\}] \, dP  & \Rightarrow &    \alpha &=&  p_1 &+& \frac{1}{2} p_2 &+& \frac{3}{10} p_3  \\
Q(\{y_2\}) &=& \int_X f(x)[\{y_2\}] \, dP   & \Rightarrow &    1-\alpha &=& 0  &+& \frac{1}{2} p_2 &+& \frac{7}{10} p_3 
\end{array}  \\
\, \, p_i \ge 0 \quad \textrm{for }i=1,2,3
\end{array}
\end{equation*}
For $\alpha=.7$ it follows $(\forall_{f^{\sharp}} g)(Q) =\frac{14}{25}$  with the argmin $P=\frac{2}{5} \delta_{x_1} + \frac{3}{5} \delta_{x_2} + 0 \delta_{x_3}$.  By changing the problem to a maximization problem one computes $(\exists_{f^{\sharp}} g)(Q) = \frac{47}{70}$  with argmax $P= \frac{4}{7} \delta_{x_1} + 0 \delta_{x_2} +  \frac{3}{7} \delta_{x_3}$.
\end{example}

\vspace{.1in}
\textit{Acknowledgements.}
The author would like to thank Jared Culbertson for many fruitful conversations and providing clarifications regarding these ideas. This work was partially supported by AFOSR, for which the author is grateful.  

\bibliographystyle{amsplain}

\vspace{.4in}
\small{
\begin{flushleft}
Kirk Sturtz \\
Universal Mathematics \\
Vandalia, OH 45377 \\
\email{kirksturtz@UniversalMath.com} \\
\end{flushleft}
 }
\end{document}